\newcommand{\darkrad}{0.115}
\newcommand{\lrad}{0.25}
\newsavebox{\dEpic}
\newsavebox{\iEpic}
\newsavebox{\viipic}
\newtheorem{thm}{Theorem}
\newtheorem{lemma}[thm]{Lemma}
\newtheorem{prop}[thm]{Proposition}
\newcommand{\qform}[1]{{\langle{#1}\rangle}}
\theoremstyle{definition}
\newtheorem{example}[thm]{Example}
\DeclareMathOperator{\tr}{tr} \DeclareMathOperator{\ad}{ad}
\DeclareMathOperator{\disc}{disc}
\DeclareMathOperator{\Br}{Br}
\DeclareMathOperator{\Inv}{Inv}
\DeclareMathOperator{\res}{res}
\DeclareMathOperator{\Hom}{Hom}
\DeclareMathOperator{\im}{im}
\DeclareMathOperator{\ch}{char}
\DeclareMathOperator{\spin}{Spin}
\begin{document}

\title{Killing Forms of Isotropic Lie Algebras}
\author{Audrey Malagon}
\address{Department of Mathematics, Mercer University, Macon, GA 31207}
\email{malagon\_al@mercer.edu}
\maketitle

\begin{abstract}
This paper presents a method for computing the Killing form of an isotropic Lie algebra defined over an arbitrary field based on the Killing form of a subalgebra containing its anisotropic kernel. This approach allows for streamlined formulas for many Lie algebras of types $E_6$ and $E_7$ and yields a unified formula for all Lie algebras of inner type $E_6$, including the anisotropic ones.
\end{abstract}

\section{Introduction}
The computation of Killing forms plays an important role in the classification of Lie algebras. The Killing form provides an invariant in the sense of Serre's lectures \cite{Coh} and has ties to other important invariants, so a method for computing them in a straightforward manner is a valuable tool. We are able to do this for Lie algebras over an algebraically closed field using the Cartan matrix. In this paper, we are primarily concerned with computing the Killing form of Lie algebras of exceptional type defined over an arbitrary field of characteristic not 2. N. Jacobson \cite{Jac2} and J-P. Serre \cite{Coh} give some results in this direction. Jacobson's results apply to certain Lie algebras of types $E_6$, $E_7$, $E_8$, $G_2$, and while they cover the exceptional Lie algebras over $\mathbb{R}$, his formulas are quite complicated, relying heavily on knowledge of the underlying Jordan and Cayley algebras. They also omit many cases over arbitrary fields. Serre gives the following straightforward formulas for $F_4$ and $G_2$ in terms of quadratic form invariants $q_3, q_5$ (See \cite{PR} or \cite{Coh} for precise definitions of these invariants. Here we use the notation of \cite{Lam}.)
\[\kappa_{G_2} = \qform{-1,-3}(q_3-1)\]
\[\kappa_{F_4} = \qform{-2}(q_5-q_3)+\qform{-1,-1,-1,-1}(q_3-1)\]

This paper presents similar streamlined formulas for other exceptional Lie algebras. Specifically, we describe a method which uses root system data to determine the Killing form of a isotropic Lie algebra in terms of the Killing form of a regular subalgebra. This method then provides a unified formula for all Lie algebras of inner type $E_6$ and produces clean, straightforward formulas for other Lie algebras of outer type $E_6$ and type $E_7$. While this paper focuses on Lie algebras whose anisotropic kernel contains a subalgebra of type $D_4$, the general method may be extended to other cases, particularly the Wesentlich case, by computing the Killing form on the anisotropic kernel consisting of algebras of type $A$. Other type two $E_6$ cases may also be studied with this method, specifically the case in which the anisotroipc kernel of type $^2A_5$ is a twisted form of $\frak{sl}_6$. Throughout the paper, the following numbering of vertices in the Dynkin diagrams will be used.
\newcommand{\rb}[1]{\raisebox{0.3in}[0pt]{#1}}
\begin{center}
\begin{tabular}{cc}
\rb{$E_6$} & \begin{picture}(7,2)
    \multiput(1,1)(1,0){5}{\circle*{\darkrad}}

    \put(3,1.75){\circle*{\darkrad}}

    \put(1,1){\line(1,0){4}}
    \put(3,1.75){\line(0,-1){.75}}

    \put(1,0.3){\makebox(0,0.4)[b]{$1$}}
    \put(2,0.3){\makebox(0,0.4)[b]{$3$}}
    \put(3,0.3){\makebox(0,0.4)[b]{$4$}}
    \put(4,0.3){\makebox(0,0.4)[b]{$5$}}
    \put(5,0.3){\makebox(0,0.4)[b]{$6$}}
    \put(3.25,1.6){\makebox(0,0.4)[b]{$2$}}
\end{picture}
\\
\rb{$E_7$} &\begin{picture}(7,2)
    \multiput(1,1)(1,0){6}{\circle*{\darkrad}}
    \put(3,2){\circle*{\darkrad}}

    \put(1,1){\line(1,0){5}}
    \put(3,2){\line(0,-1){1}}

    \put(1,0.3){\makebox(0,0.4)[b]{$1$}}
    \put(2,0.3){\makebox(0,0.4)[b]{$3$}}
    \put(3,0.3){\makebox(0,0.4)[b]{$4$}}
    \put(4,0.3){\makebox(0,0.4)[b]{$5$}}
    \put(5,0.3){\makebox(0,0.4)[b]{$6$}}
    \put(6,0.3){\makebox(0,0.4)[b]{$7$}}
    \put(3.2,2){\makebox(0,0.4)[b]{$2$}}
    \end{picture}
\end{tabular}
\end{center}
\begin{thm}\label{main}
Let $L$ be a simple, isotropic Lie algebra of dimension $n$ defined over a field $F$ with simple roots $\Delta$ (all of the same length) and Cartan subalgebra $H$. Let $\Delta_0$ denote the basis of simple roots for the anisotropic kernel of $L$, and let $A$ be a regular subalgebra of dimension $n'$ with simple roots $\Delta' \supset \Delta_0$. If $A = \displaystyle \oplus_{i=1}^{n} A_i$ with each $A_i$ simple, and $\ch F$ does not divide the Coxeter number of any $A_i$, then
the Killing form $\kappa$ on $L$ is given by

 \[\kappa = \qform{\frac{m(L)}{m(A_1)}}\kappa_1 \perp ... \perp \qform{\frac{m(L)}{m(A_n)}}\kappa_n  \perp \kappa|_{Z_H(A)} \perp
\frac{n-n'-|\Delta \setminus \Delta'|}{2}\mathcal{H}\]
where $m(-)$ is the Coxeter number of the algebra, $\kappa_i$ is the Killing form of $A_i$, and $Z_H(A)$ is the centralizer of $A$ in $H$.
\end{thm}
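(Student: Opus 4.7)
The approach is to decompose $(L,\kappa)$ as an $F$-orthogonal sum of pieces matching the right-hand side of the formula, and then compute the Killing form on each piece. Starting from the root-space decomposition $L = H \oplus \bigoplus_{\alpha \in \Phi} L_\alpha$ and the splitting $H = H_A \oplus Z_H(A)$ with $H_A = H \cap A$, I group the root spaces by whether the root lies in the root system $\Phi_A$ of $A$, obtaining
\[ L \;=\; A_1 \oplus \cdots \oplus A_k \;\oplus\; Z_H(A) \;\oplus\; \bigoplus_{\alpha \in \Phi \setminus \Phi_A} L_\alpha, \]
where $A = \bigoplus_i A_i$ is the simple decomposition of $A$.

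First I would verify that this decomposition is $\kappa$-orthogonal. The standard facts that $\kappa$ vanishes on $H \times L_\alpha$ and on $L_\alpha \times L_\beta$ for $\alpha + \beta \ne 0$ handle most pairs. The remaining orthogonalities --- $H_{A_i} \perp H_{A_j}$ for $i \ne j$ and $H_A \perp Z_H(A)$ --- follow from the defining property $\kappa(t_\alpha,h) = \alpha(h)$ of the dual element $t_\alpha$, together with the facts that simple roots from distinct simple components of $A$ are orthogonal under the normalized bilinear form on roots and that every root of $A$ vanishes on $Z_H(A)$ by construction.

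Next I would identify each summand. Because $A_i$ is simple and $\ch F$ does not divide $m(A_i)$, the restriction $\kappa_L|_{A_i}$ is an invariant bilinear form on $A_i$ and therefore equals $c_i \kappa_{A_i}$ for a unique scalar $c_i$. To compute $c_i$ I apply the identity $\sum_{\alpha \in \Phi_{\mathfrak{g}}} \alpha(h)^2 = 2\,m(\mathfrak{g})\,B^*_{\mathfrak{g}}(h,h)$, valid for any simply-laced simple Lie algebra $\mathfrak{g}$, to both $L$ and $A_i$ on a regular element $h \in H_{A_i}$. Since every root of $L$ (hence of $A_i$) has the same length, the normalized forms $B^*_L$ and $B^*_{A_i}$ agree on $H_{A_i}$, forcing $c_i = m(L)/m(A_i)$. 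The summand $\kappa|_{Z_H(A)}$ appears verbatim in the formula. For the last piece $M = \bigoplus_{\alpha \in \Phi \setminus \Phi_A} L_\alpha$, each root space is $\kappa$-isotropic and $\kappa$ pairs $L_\alpha$ nondegenerately with $L_{-\alpha}$; Galois-stability of $\Phi_A$ (since $A$ is $F$-defined) ensures that $\Phi \setminus \Phi_A$ is Galois-stable, so the hyperbolic pairings descend to $F$. Counting $\pm\alpha$-pairs yields $(|\Phi|-|\Phi_A|)/2 = (n-n'-|\Delta \setminus \Delta'|)/2$ hyperbolic planes via $|\Phi| = n - |\Delta|$ and $|\Phi_A| = n' - |\Delta'|$.

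The main obstacle I expect is the scalar computation $c_i = m(L)/m(A_i)$: it rests on the simply-laced identity $\kappa_{\mathfrak{g}} = 2\,m(\mathfrak{g})\,B^*_{\mathfrak{g}}$ together with the coincidence $B^*_L|_{H_{A_i}} = B^*_{A_i}$ coming from the equal-length hypothesis on roots, and it is precisely here that the characteristic hypothesis on $m(A_i)$ is needed. A secondary subtlety is ensuring that each hyperbolic plane on $L_\alpha \oplus L_{-\alpha}$ descends to $F$; once one checks that the Galois action permutes the pairs and preserves the pairing, the count is forced.
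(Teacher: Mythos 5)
Most of your argument runs parallel to the paper's: the orthogonal decomposition into $A$, $Z_H(A)$, and the rest; the identification $\kappa|_{A_i}=\qform{c_i}\kappa_i$ by uniqueness of the invariant form; and the evaluation of $c_i$ at an element of the Cartan of $A_i$ using $\kappa(h_\alpha,h_\alpha)=4m(L)$ versus $4m(A_i)$ (your trace identity is the same fact). The root count for the hyperbolic part is also correct. But there is a genuine gap in the step where you claim the hyperbolic part descends to $F$.

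You work with the root-space decomposition relative to the Cartan subalgebra $H$, which is not split over $F$; the individual root spaces $L_\alpha$ are only defined over a splitting field. Your justification --- that $\Phi\setminus\Phi_A$ is Galois-stable, the Galois action permutes the pairs $\{\alpha,-\alpha\}$, and preserves the pairing, hence ``the hyperbolic pairings descend'' --- does not suffice: a form that is hyperbolic over $F_{\mathrm{sep}}$ and whose underlying space is Galois-stable need not be hyperbolic over $F$. Indeed, the identical reasoning applied to the root spaces of the anisotropic kernel itself (also a Galois-stable union of $\pm\alpha$-paired isotropic lines) would show that part of $\kappa$ is hyperbolic over $F$, which is false --- for a compact real form it is negative definite. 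To make the descent work you must produce an $F$-rational totally isotropic subspace of half the dimension. This is exactly what the paper's weight-space decomposition accomplishes: because $A$ contains the anisotropic kernel, $S=Z_H(A)$ is an \emph{$F$-split} toral subalgebra, so the weight spaces $L_\mu$ ($\mu\neq 0$) for the adjoint action of $S$ are defined over $F$; each root in $\Phi\setminus\Phi_A$ restricts to a nonzero weight on $S$, each $L_\mu$ is totally isotropic of half the dimension of the nondegenerate $F$-space $L_\mu\oplus L_{-\mu}$, and hyperbolicity over $F$ follows from the standard criterion. Your proof needs this regrouping of the root spaces $L_\alpha$, $\alpha\in\Phi\setminus\Phi_A$, into the $F$-rational weight spaces of $S$ before the hyperbolic count can be asserted over the base field; note that this is also the only place the hypothesis $\Delta'\supset\Delta_0$ enters, and your proposal never uses it.
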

The results extend easily to Lie algebras whose roots have different lengths using a multiple that involves the dual Coxeter number rather than the Coxeter number of the algebra. The Killing form is computed on $Z_H(A)$ using a grading of the Lie algebra and results of \cite{ABS} to translate the question into one of dimensions of irreducible representations. Using this theorem, we also have the following formula for the Killing form of all inner type Lie algebras of type $E_6$, including the anisotropic ones, over any field of characteristic not 2 or 3. Here we refer to a modified Rost invariant $r(L)$ described in \cite{GG} and we use $e_3$ to denote the well known Arason invariant.
\begin{thm}\label{esix1}
The Killing form of a Lie Algebra of type $^1E_6$ is isomorphic to
\[ \qform{-1}4q_0 \perp \qform{2,6} \perp 24\mathcal{H} \]
where $1 \perp q_0$ is a 3-fold Pfister form and $e_3(1 \perp q_0)$ is $r(L)$.
\end{thm}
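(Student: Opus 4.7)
The plan is to verify the uniform formula case-by-case across the Tits indices of simple inner $E_6$-Lie algebras and then invoke a cohomological rigidity argument to cover the anisotropic case. First, a simple Lie algebra $L$ of inner type $E_6$ over $F$ (with $\ch F \neq 2,3$) admits exactly three Tits indices: the split index, the anisotropic index, and the intermediate index $^1E_{6,2}^{16}$ in which the two end nodes $\alpha_1$ and $\alpha_6$ are circled and the anisotropic kernel is a simple algebra of type $D_4$. In all three cases the modified Rost invariant $r(L) \in H^3(F,\mathbb{Z}/2)$ of \cite{GG} is defined, so the 3-fold Pfister form $\qform{1}\perp q_0$ with $e_3(\qform{1}\perp q_0) = r(L)$ is a bona fide quadratic-form invariant of $L$; in particular the right-hand side of the stated formula is well defined uniformly.

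For the intermediate index I would apply Theorem \ref{main} with $A$ the regular subalgebra whose simple roots are $\Delta_0 = \{\alpha_2,\alpha_3,\alpha_4,\alpha_5\}$. Then $A$ is simple of type $D_4$ with $\dim A = 28$, $|\Delta \setminus \Delta'| = 2$, $m(E_6) = 12$, and $m(A) = 6$. Since $(78 - 28 - 2)/2 = 24$, the theorem yields
\[ \kappa \;=\; \qform{2}\kappa_{D_4} \;\perp\; \kappa|_{Z_H(A)} \;\perp\; 24\mathcal{H}. \]
A direct root-system computation on the 2-dimensional centralizer $Z_H(A)$ gives $\kappa|_{Z_H(A)} \cong \qform{2,6}$, producing the middle term of the claimed formula.

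Next I would identify $\qform{2}\kappa_{D_4}$ with $\qform{-1}\cdot 4 q_0$ modulo hyperbolic planes. The $D_4$ anisotropic kernel carries a Rost invariant which, via the natural map from $D_4$-torsors to inner $E_6$-torsors described in \cite{GG}, pulls back to $r(L)$. A computation in the style of Jacobson \cite{Jac2}, or by the grading/ABS method used in the proof of Theorem \ref{main}, expresses $\kappa_{D_4}$ in terms of the associated 3-fold Pfister $\qform{1}\perp q_0$; after scaling by $\qform{2}$ and reabsorbing the hyperbolic summands into the $24\mathcal{H}$ term one recovers $\qform{-1}\cdot 4 q_0$. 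The split case of $^1E_6$ then falls out as the specialization $r(L) = 0$, in which $q_0$ is hyperbolic.

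The main obstacle is the anisotropic index, where Theorem \ref{main} does not apply directly. My plan is to view the Killing form as a Witt-valued cohomological invariant of inner $E_6$-torsors. By the classification of such mod-$2$ invariants for inner $E_6$ (in the spirit of Garibaldi--Merkurjev, using that the relevant unramified cohomology is generated by $r(L)$ together with constants), any such invariant must be a combination of constants and Arason-type forms built from $r(L)$. Since the explicit formula has been verified at the versal torsor, whose generic fibre realizes the intermediate isotropic case, a standard specialization argument forces it to hold for anisotropic $L$ as well, completing the uniform statement.
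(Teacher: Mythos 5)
Your handling of the isotropic case agrees in substance with the paper: you apply Theorem \ref{main} with $A$ the $D_4$ regular subalgebra, get the scalar $\qform{2}$ from the ratio of Coxeter numbers, compute $\kappa|_{Z_H(A)} \cong \qform{2,6}$ by the root-counting method, and obtain $24\mathcal{H}$ from the dimension count $(78-28-2)/2$; identifying $\kappa_{D_4}$ with a multiple of $q_0$ via $A \cong \frak{so}(q)$ for a $3$-fold Pfister form $q$ with $e_3(q)=r(L)$ is precisely Lemma \ref{d4}. The genuine gap is your treatment of the anisotropic index. Your versal-torsor argument is built on a false premise: the versal $^1E_6$-torsor does \emph{not} realize the intermediate index $^1E_{6,2}^{16}$. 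Isotropy specializes (a rational point of a projective homogeneous variety over the generic fibre specializes to one on any fibre), so since anisotropic groups of type $^1E_6$ exist, the versal twisted group must itself be anisotropic; moreover the intermediate index occurs only when the Tits algebras $A(1),A(6)$ are split (the group is then the isometry group of the cubic norm of a \emph{reduced} Albert algebra), while the versal torsor has nonsplit Tits algebras. So you have verified the formula exactly on the locus you hoped to specialize \emph{to}, not on the versal torsor, and the specialization step cannot begin. Two further unproved assertions compound this: the classification of Witt-valued invariants of $^1E_6$ you invoke is not in the literature and would be a substantial theorem in its own right, and the well-definedness of the right-hand side for anisotropic $L$ is not free --- one must first know that $r(L)\in H^3(F,\mathbb{Z}/2\mathbb{Z})$ is a \emph{symbol} before a Pfister form $1\perp q_0$ with $e_3(1\perp q_0)=r(L)$ exists.

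The paper closes all of this with a single elementary transfer device that your proposal is missing. Lemma \ref{odd} shows that every $L$ of type $^1E_6$ becomes split or of the intermediate index over an \emph{odd-degree} extension $K/F$: one first splits the exponent-dividing-$27$ Tits algebra over an odd-degree extension, realizes $L$ as $\Inv(A)$ for an Albert algebra $A$, and then kills the symbol $g_3(A)\in H^3(\ast,\mathbb{Z}/3\mathbb{Z})$ over a degree-$3$ extension to make $A$ reduced, which circles vertex $1$. Over $K$ one runs your isotropic computation; Rost's descent property for Pfister forms then shows $r(L)$ is already a symbol over $F$, so $q_0$ is defined over $F$; and Springer's theorem (two $F$-quadratic forms isomorphic over an odd-degree extension are isomorphic over $F$) descends the identity of quadratic forms back to $F$. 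Replacing your final paragraph with this odd-degree-extension argument would complete the proof.
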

The final sections include explicit computations of Killing forms for certain Lie algebras of exceptional types.

\section{Isotropic Lie Algebras, Tits Indices \& Tits Algebras}\label{bases}
When studying Lie algebras over arbitrary fields, it is preferable to use Tits indices instead of Dynkin diagrams. A Tits index includes the Dynkin diagram of the root system together with information about split toral subalgebras and the action of the Galois group of $F$ on the Lie algebra. Tits indices were described and classified by J. Tits in \cite{TitsSS} for algebraic groups. (We will often make use of the theory of algebraic groups, which applies to these Lie algebras (see \cite[I.3]{LAG}, \cite[III.9]{LAG3}).)

We let $\Delta$ denote the collection of simple roots for a Lie algebra $L$ defined over a field $F$ with Cartan subalgebra $H$. We let $I$ denote the indexing set of $\Delta$. There is a natural action of the Galois group $\Gamma$ of $F_{sep}/F$ on $H^*$ since over $F_{sep}$, $H$ is split. The image of $\Delta$ under the action of an element $\sigma \in \Gamma$ will be another basis for $L$, and since the Weyl group permutes bases, we have a unique element $w$ in the Weyl group such that $w \circ \sigma(\Delta) = \Delta$. Tits defines the $*$-action of $\Gamma$ as the composition of the usual action with this element of the Weyl group. That is for $\sigma \in \Gamma$
\[\sigma^* := w \circ \sigma\]
The resulting action on $\Delta$ will be a graph automorphism of the Dynkin diagram. The Tits index is drawn from the Dynkin diagram by placing vertices of the Dynkin diagram which are in the same $*$-orbit close together in the Tits index. A Lie algebra is called \emph{inner} if the *-action is trivial and \emph{outer} if there are non-trivial orbits.

In addition to depicting the *-action of $\Gamma$ on $\Delta$, the Tits index also gives information about split toral subalgebras and the anisotropic kernel. The {(semisimple) anisotropic kernel} of a Lie algebra is roughly the part containing no split toral subalgebra. Precisely, it is the derived group of the centralizer of a maximal $F$-split toral subalgebra. In the Tits index, the simple roots which form the basis for the anisotropic kernel ($\Delta_0$) are uncircled. A Tits index in which all vertices are circled indicates that the Lie algebra is split, or in the outer case, quasi-split.

%
%
%
%
%
%
%
%

 \begin{example}\label{outer}
 The Tits index of a Lie algebra of type $^2\!E_6$ with anisotropic kernel of type $^2\!D_4$ is given below. The *-action permutes vertices $1,6$ and vertices $3,5$.

\begin{center}
{\begin{picture}(5, 1.1) \put(0,0){\usebox{\dEpic}}
\put(4,0.5){\oval(0.4,0.75)}
\end{picture}} \end{center}
\end{example}

It is important to note that the $*$-action of $\Gamma$ on the Tits index (and hence on the weight and root lattices) is not the same as the usual Galois action. In fact, elements fixed under the $*$-action may not be fixed under the usual Galois action and vice versa. We will often want to know the fixed elements under the usual Galois action since, for example, the elements of a (twisted) Cartan subalgebra fixed by $\Gamma$ form an $F$-split toral subalgebra. Let $\Lambda$ and $\Lambda_r$ denote the weight lattice and root lattice respectively and $\check{\Lambda}$, $\check{\Lambda_r}$ the weight and root lattices of the dual root system, also referred to as the co-weight and co-root lattices.

 Define the {fundamental dominant co-weights} as the dual basis to $\Delta$. These are the co-weights
\[\{\check{\lambda_j} \mid j \in I, (\check{\lambda_j}, \alpha_i) = \delta_{ij}\}\]
which form a basis for the co-weight lattice. Since $\check{\Lambda_r} \subset \check{\Lambda}$, each co-root $\check{\alpha}$ can be written as an integer combination of fundamental co-weights, and each fundamental co-weight can be written as a rational combination of simple co-roots. Let $c =|{\Lambda}/{\Lambda_r}|$. Then each $c\check{\lambda_j}$ can be written as an integer combination of co-roots. Suppose $c\check{\lambda_j} = \sum_{\check{\alpha} \in \check{\Delta}} a_i \check{\alpha_i} $. Then using the map ${\Phi} \leftrightarrow H$ sending ${\alpha} \rightarrow h_{\alpha}$, let
\[h_{c\check{\lambda_j}} = \sum a_i h_{\alpha_i}\]

 Borel and Tits \cite[Corollary 6.9]{BT} describe precisely which elements of $\check{\Lambda}$ are fixed under the Galois action and therefore which elements of $H$ are fixed under the Galois action. This allows us to construct a basis for $F$-split toral subalgebras from the Tits index.

Let $\{\alpha_{i_1},...,\alpha_{i_r}\}$ be a circled orbit in the Tits index (so that the $\alpha_{i_j}$ are not in the anisotropic kernel $\Delta_0$). Associate to this orbit the element $c\check{\lambda_{i_1}}+\cdots+c\check{\lambda_{i_r}}  \in \check{\Lambda}_r$ where $c = |{\Lambda}/{\Lambda_r}|$. Then the subspace of the co-root lattice fixed under the usual Galois action is generated over $\mathbb{Q}$ by elements of the form
\[\{c\check{\lambda_{i_1}}+\cdots+c\check{\lambda_{i_r}} \mid \text{ $\{\alpha_{i_1},...,\alpha_{i_r}\}$ is a Tits orbit in $\Delta \setminus \Delta_0$}\}\]
 \cite[Corollary 6.9]{BT}.

 In the case that the $*$-action on the Tits index is trivial, all orbits contain only one vertex and \[\check{\Lambda}^{\Gamma} \otimes \mathbb{Q} = \qform{c\check{\lambda_i} \mid \alpha_i \in \Delta \setminus \Delta_0}. \]
An $F$-split toral subalgebra has basis given by elements of the form
\begin{equation}\label{basis1}
\{h_{c\check{\lambda_i}} \mid \text{vertex $i$ is circled in the Tits index}.\}
\end{equation}
In particular, the dimension of a maximal $F$-split toral subalgebra is equal to the number of circled vertices in the Tits index.

Suppose now that $L$ is type 2. The $*$ action gives a map
\[*: \Gamma \rightarrow \text{Aut} \Delta.\]
The field $K = (F_{sep})^{\ker(*)}$ is a finite extension of $F$ of degree $|\text{im}(*)| = 2$ for which the Tits Index of $L \otimes K$ is of inner type. Letting $K = F(\sqrt{a})$ and using \cite[p.279]{LAG2}, a basis for the maximal $F$-split toral subalgebra in $L$ is given by
\begin{equation}\label{basis2}
\{h_{c\check{\lambda_i}} + h_{\check{c\lambda_j}}, \sqrt{a}h_{c\check{\lambda_i}}-\sqrt{a}h_{c\check{\lambda_j}} \mid  \text{$\{\alpha_i, \alpha_j\}$ \footnotesize{a circled Tits orbit}} \}
\end{equation}

In the case that all roots have the same length, weights and co-weights are equal. Formulas for the fundamental dominant weights written as sums of the simple roots can be found in \cite[Sec. 13, Table 1]{Hum}.


We also have associated to each orbit in the Tits index a central simple algebra known as a Tits Algebra. These were defined by Tits in \cite{Tits1971} and are also described in \cite{Anne}. These central simple algebra invariants give us information about the Lie algebra which will be necessary for the results on Killing forms. Tits defines these algebras by giving a bijection between dominant weights fixed under the $*$-action and algebra representations of $L$. An algebra representation of a Lie algebra $L$  defined over $F$ is simply a map $\rho: L \rightarrow GL_{1}(A)$ for a central simple algebra $A$ over $F$ (see \cite{Anne}).

It is well known that there exists a bijection between dominant weights of a split Lie algebra and irreducible representations of the Lie algebra (\cite[Lemma 2.2]{Tits1971}). Let
\[\beta: \Lambda_+ \rightarrow \text{irreducible representations of $L$}.\]
Tits extends this notion to non-split algebras by restricting $\beta$ to just the elements of $\Lambda_+$ fixed under the $*$-action of $\Gamma$. In this case by \cite[Theorem 3.3]{Tits1971}, we have a bijection
\[\Lambda_+^{\Gamma} \leftrightarrow \{\text{irreducible algebra representations of $L$}\}\]
If $\lambda$ is in the root lattice of $L$, then the the Tits algebra associated to $\lambda$ is split.
Furthermore, the Brauer class of the Tits algebra associated to the sum of weights $\lambda+\mu$ is the same as the sum in the Brauer group of the Tits algebras associated to the weights $\lambda$ and $\mu$. These properties allow us to define a homomorphsim
\[\alpha: (\Lambda / \Lambda_r)^{\Gamma} \rightarrow \Br(F)\]
and define the Tits algebra of a weight only up to Brauer equivalence. For any $\lambda \in \Lambda_+$ associated to a $*$-orbit in the Tits index we write $A(\lambda)$ for its Tits algebra. In the case of an inner type Lie algebra, we may associated a Tits algebra to each vertex and we deonte the Tits algebra associated to vertex $i$ by $A(i)$.

\section{Weight Space Decomposition}

For this section $S$ is a (not necessarily maximal) $F$-split toral subalgebra, which sits inside a Cartan subalgebra $H$. It acts on the Lie algebra via the adjoint representation, and we can decompose the Lie algebra into the eigenspaces induced by this action just as we would for the action of $H$ in the split case. We use $L_{\mu}$ to denote the weight space associated to the weight $\mu$:
\[L_{\mu} := \{ x \in L  \mid \text{$ [hx] = \mu(h)x$ for all $h \in S$} \}\]

Just as we had a highest root, we will also have a {highest weight}.
Using the weight spaces, $L$ decomposes as

\[L = L_{0} \oplus \bigoplus_{\mu \in S^*, \mu \neq 0} (L_{\mu} \oplus L_{-\mu}) \]
A weight space decomposition is similar to a root space decomposition in that the nonzero weight spaces occur in positive and negative pairs. If $\mu$ is a weight, $-\mu$ is also a weight, and furthermore $L_{\mu}$ and $L_{-\mu}$ have the same dimension. It is not true, however, that $L_{\mu}$ must be one dimensional. In addition, the zero weight space is not the same as the zero root space. It will contain the Cartan subalgebra $H$, but it is often larger than $H$ alone. The weight space decomposition is useful for computing the Killing form because the properties of weights mirror the advantageous properties of roots.

\begin{lemma} \label{rootform} Let $L$ be a simple Lie algebra over a field $F$ with Chevalley basis $\{h_{\alpha_i} \mid \alpha_i \in \Delta\} \cup \{ x_{\alpha} \mid \alpha \in \Phi^+\} \cup \{x_{-\alpha} \mid \alpha \in \Phi^+\}$ over an algebraic closure of $F$. If $\mu$, $\lambda$ are weights of $L$ such that $\mu+\lambda \neq 0$, then $L_{\mu}$ is perpendicular to $L_{\lambda}$ relative to the Killing form $\kappa$ of $L$. Furthermore, $L_{\mu} \oplus L_{-\mu}$ is hyperbolic with respect to $\kappa$. If $L$ is split, the Killing form $\kappa$ is given by the Weyl-invariant inner product on the dual root space. Specifically
\[\kappa(h_{\alpha}, h_{\beta}) = 2m^*(L)(\check{\alpha}, \check{\beta})\]
where $(\check{\alpha}, \check{\beta})$ is the Weyl-invariant inner product with $(\check{\alpha}, \check{\alpha}) = 2$ for a long root  $\alpha$ and $m^*(L)$ is the dual Coxeter number of the algebra.
\end{lemma}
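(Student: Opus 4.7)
The plan is to handle the three claims in turn, each reducing to invariance of the Killing form together with one standard structural fact. For the orthogonality of $L_\mu$ and $L_\lambda$ when $\mu + \lambda \neq 0$, I would exploit associativity: for any $x \in L_\mu$, $y \in L_\lambda$, and $h \in S$,
\[0 = \kappa([h,x],y) + \kappa(x,[h,y]) = (\mu(h)+\lambda(h))\,\kappa(x,y),\]
and since $S$ is split and $\mu + \lambda \neq 0$, some $h \in S$ satisfies $(\mu+\lambda)(h) \neq 0$, forcing $\kappa(x,y) = 0$.

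For the hyperbolicity, I would apply the previous step with $\lambda = \mu \neq 0$, using $\ch F \neq 2$ to guarantee $2\mu \neq 0$; this shows each nonzero weight space $L_\mu$ is totally isotropic, and likewise $L_{-\mu}$. The decomposition $L = L_0 \oplus \bigoplus_{\mu \neq 0}(L_\mu \oplus L_{-\mu})$ is $\kappa$-orthogonal by the first step, so non-degeneracy of $\kappa$ on $L$ (valid since $L$ is simple) restricts to non-degeneracy on each summand $L_\mu \oplus L_{-\mu}$. Two totally isotropic subspaces of equal dimension paired non-degenerately assemble into a hyperbolic plane-sum, giving the claim.

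For the split-case formula, I would observe that $\kappa(h,h') = \sum_{\gamma \in \Phi} \gamma(h)\gamma(h')$ transports under the correspondence $\alpha \leftrightarrow h_\alpha$ to a Weyl-invariant symmetric bilinear form on $H^*$. Since the Weyl group acts irreducibly on $H^*$ for a simple root system, any such form is unique up to scalar and must be proportional to the normalized pairing $(\check\alpha,\check\beta)$. The scalar is pinned down at a long root $\alpha$: the sum $\sum_{\gamma \in \Phi} \gamma(h_\alpha)^2$ equals $4\,m^*(L)$ by the very definition of the dual Coxeter number, matching $2\,m^*(L)(\check\alpha,\check\alpha) = 4\,m^*(L)$.

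The main obstacle is this last step: pinning down the scalar without circularity requires either taking the identification $\sum_\gamma (\gamma,\alpha)^2 = 2\,m^*(L)(\alpha,\alpha)$ as the working definition of $m^*(L)$ in this context, or citing a standard reference for the Killing form of a split simple Lie algebra in terms of the dual Coxeter number. The preceding parts need nothing beyond non-degeneracy of $\kappa$ and the hypothesis $\ch F \neq 2$; the Chevalley basis enters only to justify that weight-space orthogonality computed over the algebraic closure descends to $F$, which is automatic since the identities involved are polynomial in the structure constants.
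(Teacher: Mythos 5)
Your proposal is correct and follows essentially the same route as the paper: associativity of $\kappa$ for the orthogonality, total isotropy of $L_\mu$ plus non-degeneracy for the hyperbolicity, and Weyl-invariance together with Schur's lemma and normalization at a long root for the split-case formula (the paper resolves the normalization you flag by citing Springer--Steinberg for $\kappa(h_\alpha,h_\alpha)=4m^*(L)$). No gaps.
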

\begin{proof}
Let $\mu, \lambda$ be weights of $L$. Since $\mu + \lambda \neq 0$, we must have $h \in H$ such that $(\mu + \lambda)(h) \neq 0$.  Let $v_1 \in L_{\mu}$ and $v_2 \in L_{\lambda}$. The Killing form is associative so we have
\begin{align*}
\mu(h)\kappa(v_1, v_2) &= \kappa([hv_1],v_2)=   -\kappa([v_1h],v_2)\\ &=  -\kappa(v_1,[hv_2])= -\lambda(h)\kappa(v_1,v_2) \\
(\mu(h)+\lambda(h))\kappa(v_1,v_2) &= 0
\end{align*}
Since $(\mu+\lambda)(h) = \mu(h) + \lambda(h) \neq 0$, we must have
\[\kappa(v_1,v_2) = 0 \]
Furthermore, since $\mu + \mu \neq 0$, $L_{\mu}$ is totally
isotropic with respect to $\kappa$.  Since $\dim(L_{\mu}) =
\frac{1}{2}\dim(L_{\mu} \oplus L_{-\mu})$ and $L_{\mu} \oplus
L_{-\mu}$ is non-degenerate, $\kappa$ restricts to be hyperbolic on $L_{\mu} \oplus L_{-\mu}$ (\cite[I.3.4(1)]{Lam}). In the split case, our weights are just the roots. Let $\check{\alpha}, \check{\beta} \in \check{\Phi}$. Define a symmetric bilinear form $f$ on $\check{\Phi}$ by
\[f(\check{\alpha}, \check{\beta}) = \kappa (h_{\alpha}, h_{\beta})\]
This form is Weyl-invariant. (By \cite[Lemma 9.2]{Hum}, $\qform{\alpha,\beta}$ is Weyl-invariant, and since the Weyl group action preserves root length, this implies the inner product on the root space $(\alpha, \beta)$ is Weyl-invariant. But $\sigma_{\check{\alpha}}(\check{\beta}) = \check{(\sigma_{\alpha}{\beta})}$, which implies that the form $f$ on the dual root system is also Weyl-invariant.)
If $\Phi$ is an irreducible root system, then $\Phi$ is an irreducible representation of the Weyl group. If not, $\Phi$ decomposes uniquely as a direct sum of irreducible root systems (\cite[Prop 6, VI.1.2]{Bbk}) so it suffices to work in the irreducible case. Then Schur's lemma states that there is at most one Weyl-invariant symmetric form on $\Phi$ up to scalars. Take the scalar multiple of $f$ that gives $(\check{\alpha}, \check{\alpha}) = 2$ for a long root $\alpha$ so that the form may be computed from the literature (see for example \cite{Bbk}). By \cite[p.14]{Cox} for a long root $\alpha$
\[\kappa(h_{\alpha},h_{\alpha}) = 4 m^*(L)\]
Therefore
\[\kappa(h_{\alpha},h_{\alpha}) = 2 m^*(L)(\check{\alpha}, \check{\alpha}) \qedhere \]

\end{proof}

\textbf{Note:} In the case of a simply laced root system, $\check{\alpha} = \alpha$ and $\qform{\alpha,\beta} = (\alpha,\beta)$ so over an algebraically closed field the Killing form on $H$ is given by the Cartan matrix.

The next lemma describes the zero weight space.

\begin{lemma} \label{zerows}
Let $S$ be an $F$-split toral subalgebra of $L$ and let $A$ be the derived subalgebra of $Z_{L}(S)$. Let $H$ be a (not necessarily split) Cartan subalgebra of $L$. Then in the weight space decomposition of $L$ with respect to $S$
\[L_0 = Z_H(A) \oplus A\]
where $\oplus$ is an orthogonal sum with respect to the Killing form. Furthermore, $A$ is semisimple and $A$ contains the anisotropic kernel of $L$.
\end{lemma}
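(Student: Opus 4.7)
The plan is to identify $L_0$ with the centralizer $Z_L(S)$, exploit the fact that this centralizer is reductive, and match the resulting decomposition with $Z_H(A)\oplus A$. Since $S$ consists of semisimple elements, an element $x\in L$ lies in the zero weight space exactly when $[h,x]=0$ for every $h\in S$, so $L_0=Z_L(S)$. Because $S$ is an $F$-split torus inside a semisimple Lie algebra, $Z_L(S)$ is reductive; this is the Lie algebra translation of the standard fact that the centralizer of a torus in a reductive group is reductive, and is available over $F$ using the algebraic-group setup referenced earlier (\cite{LAG}, \cite{LAG3}). Reductivity immediately supplies $Z_L(S)=Z(Z_L(S))\oplus [Z_L(S),Z_L(S)]=Z(Z_L(S))\oplus A$ as a direct sum of ideals, and gives that $A$, being the derived subalgebra of a reductive Lie algebra, is semisimple.

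I would then check orthogonality by associativity of $\kappa$: for any $z\in Z(Z_L(S))$ and any bracket $[y_1,y_2]\in A$,
\[\kappa(z,[y_1,y_2])=\kappa([z,y_1],y_2)=\kappa(0,y_2)=0.\]
Combined with the nondegeneracy of $\kappa|_{L_0}$ supplied by Lemma \ref{rootform}, this makes the sum orthogonal.

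Next I would identify $Z(Z_L(S))$ with $Z_H(A)$. The Cartan $H$ fixed at the start of the section contains $S$, and since it is a maximal toral subalgebra of $L$ containing $S$ it is also a Cartan of $Z_L(S)$. The center of a reductive Lie algebra lies inside every Cartan, so $Z(Z_L(S))\subseteq H$; as $Z(Z_L(S))$ already centralizes $A\subseteq Z_L(S)$, this gives $Z(Z_L(S))\subseteq Z_H(A)$. Conversely, any $h\in Z_H(A)$ sits in $H\subseteq Z_L(S)$ and so automatically commutes with the central factor $Z(Z_L(S))$, while commuting with $A$ by hypothesis; thus $h$ centralizes $Z_L(S)=Z(Z_L(S))\oplus A$ and belongs to $Z(Z_L(S))$.

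Finally, to see that $A$ contains the anisotropic kernel, I would enlarge $S$ to a maximal $F$-split toral subalgebra $S_{\max}\supseteq S$, whence $Z_L(S_{\max})\subseteq Z_L(S)$ and passing to derived subalgebras yields the anisotropic kernel $[Z_L(S_{\max}),Z_L(S_{\max})]\subseteq A$. The main technical obstacle is the pair of structural assertions that $Z_L(S)$ is reductive and that its center sits inside the chosen Cartan $H$: both are standard when $L$ is realized as the Lie algebra of a reductive algebraic $F$-group and $S$ as the Lie algebra of an $F$-split subtorus, but invoking them cleanly over an arbitrary field of characteristic different from $2$ requires some care with the underlying algebraic-group results.
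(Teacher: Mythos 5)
Your proof is correct and follows essentially the same route as the paper's: identify $L_0$ with $Z_L(S)$, invoke reductivity of the centralizer of a toral subalgebra to split off $A=[L_0,L_0]$ from the center, locate that center inside $H$ to get $Z_H(A)$, and enlarge $S$ to a maximal $F$-split toral subalgebra for the final containment. Your explicit orthogonality check via associativity of $\kappa$ and your argument that the center lies in $H$ because $H$ is a Cartan subalgebra of the reductive algebra $Z_L(S)$ are slightly more careful than the paper's wording (which appeals to conjugacy of maximal toral subalgebras), but they are refinements of the same approach rather than a different one.
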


\begin{proof}
It is clear that $L_0 = Z_L(S)$. Since $L$ is semisimple, $L_0$ is the centralizer of a toral subalgebra, so $L_0$ is reductive \cite[Corollary A 26.2]{LAG3} and by \cite[Theorem 11]{Jac} decomposes as
\[L_0 = Z(L_0) \oplus [L_0L_0]\]
By definition we have $A = [L_0L_0]$. Let us now examine $Z(L_0)$. Let $x \in L_0$. Clearly $[xZ(L_0)]=0$ so $x \in Z(L_0)$ if and only if $[xA] = 0$. By  \cite[Theorem 11]{Jac} $Z(L_0)$ is a toral subalgebra and is therefore contained in a maximal toral subalgebra. Since all maximal toral subalgebras are conjugate, $Z(L_0)$ is contained in all maximal toral subalgebras, and in particular $Z(L_0)$ is contained in $H$. Therefore $Z(L_0)$ is precisely the elements of $H$ which centralize $A$.
\[Z(L_0) = Z_H(A)\]
If $S$ is a maximal $F$-split toral subalgebra, $A$ is precisely the semisimple anisotropic kernel of $L$ with simple roots corresponding to the non-circled vertices in the Tits index  (\cite{TitsSS}). If $S$ is contained in a maximal $F$-split toral subalgebra, then $A$ will contain the semisimple anisotropic kernel.
\end{proof}

Lemmas \ref{rootform} and \ref{zerows} give the following decomposition for any isotropic Lie algebra $L$ with split toral subalgebra $S$ and subalgebra $A$ containing the anisotropic kernel.
\[L = A \oplus Z_H(A) \oplus \bigoplus_{\mu \in S^*, \mu \neq 0} (L_{\mu} \oplus L_{-\mu})\]
where the sums oustide the parenthesis are orthogonal with respect to $\kappa$

\section{Method for Computing Killing Forms}

The weight space decomposition of $L$ given in the previous section significantly streamlines the computation of the Killing form. With this decomposition in mind, we now prove Theorem \ref{main}
\begin{proof}

Since $A$ contains the anisotropic kernel of $L$, we know that $Z_H(A)$ is $F$-split. Decomposing with respect to $Z_H(A)$ gives
\[L = A \oplus Z_H(A) \oplus \bigoplus_{\mu \in S^*, \mu \neq 0} (L_{\mu} \oplus
L_{-\mu})\] where the sums outside the parenthesis are orthogonal
with respect to $\kappa$.  Furthermore, each $A_i$ is orthogonal with respect to the Killing form so $\kappa|_A$ is the sum of the $\kappa|_{A_i}$. Let $\kappa_i$ denote the Killing form of the subalgebra $A_i$. We first note that $\kappa|_{A_i}$ is $A_i$-invariant by the properties
of trace. Since $\kappa_i$ is also $A_i$-invariant, and the nonzero
$A_i$-invariant bilinear form on $A_i$ is unique up to a scalar multiple
(\cite[Theorem 5.1.21]{GW}), we must have
\[\kappa|_{A_i} = \qform{c} \kappa_i\]
for some scalar $c$. Now let $\alpha \in \Delta' \subset \Delta$ so
$h_{\alpha} \in A_i$.
Then by Lemma \ref{rootform}
\[\kappa(h_{\alpha},h_{\alpha}) = 4 m(L) \quad\text{and}\quad \kappa_i(h_{\alpha},h_{\alpha}) = 4m(A_i)\]
forcing $c =\frac{m(L)}{m(A_i)}$. That is,
 \[\kappa|_A = \frac{m(L)}{m(A_1)}\kappa_1 \perp ... \perp \frac{m(L)}{m(A_l)}\kappa_l.\]

On the subspaces  $L_{\mu_i} \oplus
L_{-\mu_i}$, we know that $\kappa$ restricts to be hyperbolic (Lemma \ref{rootform}) and the dimension of the nonzero (hyperbolic) weight space is $\dim{L} - \dim{L_0} = n - (n'+|\Delta \setminus \Delta'|)$. This gives the result.

\end{proof}
\textbf{Note:} In the case that $L$ has roots of different lengths, the above theorem holds by simply replacing the Coxeter number with the dual Coxeter number, except in the following case. Suppose there exists $\alpha \in \Delta' \subset \Delta$ such that $\alpha$ is short in $A_i$ but long in $L$. Then according to \cite[p.14-15]{Cox}
\[\kappa'(h_{\alpha},h_{\alpha}) = 4m^*(A_i)\]
\[\kappa (h_{\alpha}, h_{\alpha}) = 4cm^*(L)\]
where $c$ is the square of the ratio of the root lengths and $m^*$ is the dual Coxeter number of the algebra.
Then the scalar is $\frac{cm^*(L)}{m^*(A_i)}$.

The only remaining subspace of the decomposition for which we have not computed the Killing form is $Z_H(A)$. The next section discusses computing the Killing form on a toral subalgebra by calculating dimensions of appropriate irreducible representations.

\section{Calculating the Killing form on $Z_H(A)$}

In this section we discuss a streamlined method for computing the Killing form on a split toral subalgebra, specifically the centralizer of a subalgebra containing the anisotropic kernel. We will assume our root system is simply laced so that coroots equal roots and coweights equal weights. Since $A$ contains the anisotropic kernel of $L$, $Z_H(A)$ is split or quasi-trivial. We use the results of Section \ref{bases} to see the basis of a maximal $F$-split toral subalgebra containing $Z_H(A)$. Since the Killing form on a toral subalgebra of outer type can be computed in terms of basis elements for the inner type toral subalgebra, we will work only in the case that $L$ is of inner type and $\Phi$ is simply connected.

\begin{prop}
Let  $A$ have basis $\Delta' \subset \Delta$. Then a basis for $Z_H(A)$ is

\[\{ h_{c\lambda_j} \mid \alpha_j \in \Delta \setminus \Delta'\} \]

\end{prop}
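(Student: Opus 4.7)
The plan is to exploit the duality between fundamental (co)weights and simple (co)roots to identify which elements of $H$ annihilate every root direction lying in $\Delta'$.

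First, I would reduce the centralizer condition to a purely root-theoretic one. Since $A$ is the regular subalgebra with simple roots $\Delta'$, it is generated (together with its Cartan) by the root vectors $x_{\pm\alpha}$ for $\alpha \in \Delta'$. For $h \in H$, $[h, x_{\pm\alpha}] = \pm\alpha(h)\,x_{\pm\alpha}$, and $h$ already centralizes $H \supset H \cap A$. Hence
\[
h \in Z_H(A) \iff \alpha(h) = 0 \text{ for every } \alpha \in \Delta'.
\]

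Next, I would compute the pairing $\alpha_k(h_{c\lambda_j})$ for $\alpha_k \in \Delta$. Writing $c\lambda_j = \sum_i a_i \alpha_i$ (using simple-lacedness so roots equal coroots and weights equal coweights) and recalling $h_{c\lambda_j} = \sum_i a_i h_{\alpha_i}$, we get $\alpha_k(h_{c\lambda_j}) = \sum_i a_i(\alpha_k,\alpha_i) = (\alpha_k, c\lambda_j) = c\,\delta_{jk}$, by the defining property of the fundamental dominant (co)weights. Thus for $\alpha_j \in \Delta \setminus \Delta'$, every $\alpha_k \in \Delta'$ satisfies $\alpha_k(h_{c\lambda_j}) = 0$, so $h_{c\lambda_j} \in Z_H(A)$.

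Finally, I would show this set is a basis by running the argument in reverse. The full family $\{h_{c\lambda_j}\}_{j \in I}$ is a $\mathbb{Q}$-basis of $H$, being $c$ times the image in $H$ of the dual basis to $\Delta$. So any $h \in H$ has a unique expansion $h = \sum_j b_j\, h_{c\lambda_j}$, and the computation above gives $\alpha_k(h) = c\,b_k$ for each $\alpha_k \in \Delta$. The centralizer condition forces $b_k = 0$ for every index $k$ with $\alpha_k \in \Delta'$, leaving precisely $h = \sum_{\alpha_j \in \Delta \setminus \Delta'} b_j\, h_{c\lambda_j}$. Combined with the linear independence inherited from the basis of $H$, this proves spanning and independence simultaneously.

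The only subtlety — and the closest thing to an obstacle — is bookkeeping the factor $c = |\Lambda/\Lambda_r|$, which was introduced in Section~\ref{bases} precisely to guarantee $h_{c\lambda_j}$ lies in the integral span of the $h_{\alpha_i}$. Since $c$ is a nonzero scalar, it affects neither the centralizer condition nor the linear independence, so the argument goes through cleanly in the simply laced, simply connected setting assumed at the start of the section.
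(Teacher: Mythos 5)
Your proof is correct, and its core is the same computation the paper uses: reduce membership in $Z_H(A)$ to the vanishing of $\alpha_k(h)$ for all $\alpha_k \in \Delta'$ (since $A$ is generated by the $x_{\pm\alpha_k}$), then use the duality $(\lambda_j,\alpha_k)=\delta_{jk}$ in the simply laced setting to see that each $h_{c\lambda_j}$ with $\alpha_j \in \Delta\setminus\Delta'$ passes this test. Where you genuinely differ is in the completeness step. The paper observes that the candidate elements lie in $\bigcap_k \ker\phi_k$ and then quotes the rank count $\dim Z_H(A) = |\Delta\setminus\Delta'|$ coming from the Borel--Tits description of the maximal $F$-split toral subalgebra in Section 2; you instead expand an arbitrary $h \in H$ in the dual basis $\{h_{c\lambda_j}\}_{j\in I}$ of $H$ and read off directly that the centralizer condition annihilates exactly the coefficients indexed by $\Delta'$, giving spanning and independence in one stroke. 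Your route is more self-contained (no appeal to the Tits-index rank computation), at the mild cost of working a priori over a splitting field of $H$; since the section restricts to inner type and $A$ contains the anisotropic kernel, the vectors $h_{c\lambda_j}$ for circled $j$ are Galois-fixed and the argument descends to $F$, so this is not a gap.
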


\begin{proof} A basis for $Z_H(A)$ is contained in the basis \ref{basis1}. Let $h_{c\lambda_j}$ be a basis element from \ref{basis1}. Then $h_{c\lambda_j} \in Z_H(A)$ if and only if
\[[h_{c\lambda_j}x_{\alpha_k}] = \qform{\alpha_k, c\lambda_j} = 0\]
for all $\alpha_k \in \Delta'$. For each such $\alpha_k$ define
\[ \phi_k: {\Lambda}_r \rightarrow \mathbb{Z}: \alpha_k \rightarrow \qform{\alpha_k, \lambda}\]
Then $h_{\lambda} \in Z_H(A)$ if and only if ${\lambda} \in \displaystyle{\bigcap_k} \ker \phi_k$. Clearly each basis element given above is in $\bigcap_k \ker \phi_k$, and from Section \ref{bases} we know the rank of $Z_H(A)$ is $|\Delta \setminus \Delta'|$.
\end{proof}
We can compute the Killing form on $Z_H(A)$ using this basis. Let $h_{c{\lambda_j}} = \sum_{\alpha_i \in \Delta}a_ih_{\alpha_i}$ First note that
\[[h_{c{\lambda_j}} x_{\alpha}] = \alpha(h_{c{\lambda_j}})x_{\alpha} = \qform{\alpha, c{\lambda_j}}
 c a_jx_{\alpha}\]
so $\ad(h_{c\lambda_j})$ is a diagonal matrix with
entries $c$(coefficient of $\alpha_j$). Using the symmetry of positive and negative roots we have
\begin{equation}\label{kappaij}
\tr(\ad(h_{c\lambda_i})\ad(h_{c\lambda_j})) = \\
 2c^2 \sum_{\alpha \in \Phi+}
(\text{coefficient of $\alpha_i$ in $\alpha$})(\text{coefficient of
$\alpha_j$ in $\alpha$})
\end{equation}

or in the case $i=j$
\begin{equation}\label{kappaii}
\tr(\ad(h_{c\lambda_i})\ad(h_{c\lambda_i})) = 2c^2\sum_{\alpha \in \Phi+}
(\text{coefficient of $\alpha_i$ in $\alpha$})^2
\end{equation}
To compute $\kappa(h_{c\lambda_i}, h_{c\lambda_j})$, we need only those roots in $\Phi^+$ with nonzero $\alpha_i$ and $\alpha_j$ coefficients. To count these roots we introduce the notation of \cite{ABS}.

Fix a subset $\Delta_J \subset \Delta$. The \emph{level} of a positive root $\beta = \displaystyle \sum_{i \in I} c_i\alpha_i$ with respect to $\Delta_J$ is the sum of the coefficients of the $\alpha_i \in \Delta \setminus \Delta_J$. The \emph{shape} of $\beta$ is $\sum_{i \in I\setminus J} c_i \alpha_i$. The goal is to grade the Lie algebra according to the level and shape of its roots. Then the Killing form of an element $h_{n\lambda_i}$ can be given in terms of the dimension of an irreducible representation of a subalgebra.

Let $M_{I\setminus J}(l)$ denote the product of all root spaces $L_{-\beta}$ with $\text{level}(\beta) = l$. By \cite[Theorem 2]{ABS},
\[M_{I\setminus J}(l) = \prod_{\text{$S$ a shape of level $l$}} V_S \]
where each $V_S$ is an irreducible representation of the Lie algebra $L_J$ generated by basis $\Delta_J$.

Furthermore, the highest weight of $V_S$ is the negative of the root with shape $S$ and minimal height (in $L$) (\cite{ABS}). In the case $\Delta \setminus \Delta_J = \alpha_i$, the only possible shape for a fixed level $l$ is $l\alpha_i$ so $M_{i}(l)$ is a standard cyclic representation of $L_J$ with highest weight $-l\alpha_i - \beta_l$, denoted $V_{L_J}(-l\alpha_i - \beta_l)$. The dimension of this representation can be used to compute $\kappa(h_{n\lambda_i}, h_{n\lambda_i})$.
\begin{prop}\label{dim}
\[\kappa(h_{n\lambda_i},h_{n\lambda_i}) = 2n^2 \sum_{l>0} l^2
\dim V_{L_J}(-l\alpha_i-\beta_l)\]
\end{prop}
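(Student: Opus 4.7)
The plan is to read off the result from equation (\ref{kappaii}) by reorganizing the sum and identifying the count of roots at each level with a dimension supplied by the theorem of \cite{ABS}. Setting $\Delta_J = \Delta \setminus \{\alpha_i\}$, the level of a positive root $\beta$ with respect to $\Delta_J$ is by definition the coefficient of $\alpha_i$ in $\beta$. So equation (\ref{kappaii}), applied with $c = n$, already rewrites $\kappa(h_{n\lambda_i},h_{n\lambda_i})$ as
\[
2n^2 \sum_{\alpha \in \Phi^+} \bigl(\text{coefficient of } \alpha_i \text{ in } \alpha\bigr)^2
  = 2n^2 \sum_{l>0} l^2 \cdot \#\{\beta \in \Phi^+ : \mathrm{level}(\beta) = l\},
\]
after grouping positive roots by their level.

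The next step is to rewrite the cardinality appearing in the inner sum as a dimension. By the definition of $M_i(l)$ as the direct sum of root spaces $L_{-\beta}$ with $\mathrm{level}(\beta) = l$, and because each root space is one-dimensional, we have $\dim M_i(l) = \#\{\beta \in \Phi^+ : \mathrm{level}(\beta) = l\}$.

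Finally, I would invoke \cite[Theorem 2]{ABS}. Since $\Delta \setminus \Delta_J$ is the singleton $\{\alpha_i\}$, the only possible shape at level $l$ is $l\alpha_i$, so the product decomposition of $M_i(l)$ over shapes collapses to a single irreducible $L_J$-representation, whose highest weight is the negative of the root of shape $l\alpha_i$ of minimal height, namely $-l\alpha_i - \beta_l$. Thus $\dim M_i(l) = \dim V_{L_J}(-l\alpha_i - \beta_l)$, and substituting into the displayed equation above yields the claimed formula.

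There is no serious obstacle here: the proof is essentially bookkeeping, with all nontrivial input packaged into equation (\ref{kappaii}) and the theorem of Azad, Barry, and Seitz. The one subtlety worth stating explicitly is the collapse of the ABS decomposition to a single irreducible summand, which is immediate from the fact that $I \setminus J$ has a single element and hence shapes are determined by levels.
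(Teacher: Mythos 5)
Your proposal is correct and follows essentially the same route as the paper: the paper's proof is exactly the two-step rewriting of equation (\ref{kappaii}) by grouping positive roots according to their level and identifying the resulting count with $\dim M_{\{i\}}(l)$, with the identification $M_{\{i\}}(l) = V_{L_J}(-l\alpha_i-\beta_l)$ already supplied by the discussion of \cite{ABS} preceding the proposition. You have merely made explicit the bookkeeping (one-dimensionality of root spaces, collapse of the shape decomposition to a single summand) that the paper leaves implicit.
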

\begin{proof}
\begin{align*}
\tr(\ad(h_{n\lambda_i})\ad(h_{n\lambda_i})) &= 2n^2\sum_{\beta \in
\Phi^+} (\text{coefficient of $\alpha_i$ in $\beta$})^2\\
 &= 2n^2
\sum_{l>0} l^2 \dim_{L_J} M_{\{i\}}(l). \qedhere
\end{align*}
\end{proof}
The same method can be used in some cases to compute $\kappa(h_{n\lambda_i},h_{n\lambda_j})$ for $i \neq j$. In this case, take $\Delta_J = \Delta \setminus \{\alpha_i, \alpha_j\}$.
A root $\beta$ contributes to the above sum if only if both its $\alpha_i$ and $\alpha_j$ coefficients are nonzero. In the $E_6$ case with $L' = D_4$, this condition states $level(\beta) = 2$ with respect to $\Delta \setminus \Delta_J = \{\alpha_1, \alpha_6\}. $
The notation makes this process appear more complicated than necessary. Using it to compute the Killing forms for Real Lie algebras in the next section should clarify that this is really a straightforward process.
\section{Real Lie Algebras}
Theorem $\ref{main}$ produces very nice results for Lie algebras over the real numbers. It allows us to read the Killing form directly from the Tits index by noting the number of single circled vertices and the size of the anisotropic kernel.

\begin{lemma}\label{orbit}
Let $L$ be a Lie algebra of type $2$ over $F$ and let $\{\alpha_i, \alpha_j\}$ be an orbit in the Tits index of $L$ not contained in $\Delta_0$. The Killing form on the 2 dimensional toral subalgebra associated to this orbit is given by
\[\qform{2(x+y), 2a(x-y)}\]
where $x$ and $y$ are integers such that $x>y>0$ and $x = \kappa(h_{c\lambda_i},h_{c\lambda_i}) =  \kappa(h_{c\lambda_j},h_{c\lambda_j})$, $y =\kappa(h_{c\lambda_i},h_{c\lambda_j})$. The field $K=F(\sqrt{a})$ is the quadratic extension over which $L$ is type 1.

\end{lemma}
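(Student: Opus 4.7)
The plan is to apply bilinearity of $\kappa$ to the basis \eqref{basis2} of the 2-dimensional toral subalgebra. Setting $u := h_{c\lambda_i} + h_{c\lambda_j}$ and $v := \sqrt{a}\,h_{c\lambda_i} - \sqrt{a}\,h_{c\lambda_j}$, each of $\kappa(u,u)$, $\kappa(v,v)$, and $\kappa(u,v)$ expands as a $\mathbb{Z}[\sqrt{a}]$-combination of the three scalars $\kappa(h_{c\lambda_i}, h_{c\lambda_i})$, $\kappa(h_{c\lambda_j}, h_{c\lambda_j})$, and $\kappa(h_{c\lambda_i}, h_{c\lambda_j})$, so the proof reduces to matching these to the stated $x$ and $y$.

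The main obstacle is showing that the two self-pairings equal a common scalar $x$. The argument I would give is that, since $\{\alpha_i,\alpha_j\}$ is a $*$-orbit, the Dynkin diagram automorphism $\tau$ swapping vertices $i$ and $j$ lifts over $F_{\mathrm{sep}}$ to a Killing-form-preserving automorphism of $L$ that sends $h_{c\lambda_i}$ to $h_{c\lambda_j}$, forcing $\kappa(h_{c\lambda_i},h_{c\lambda_i}) = \kappa(h_{c\lambda_j},h_{c\lambda_j})$. Equivalently, Proposition \ref{dim} writes each self-pairing as a positive weighted sum over $\Phi^+$, and $\tau$ bijects the two index sets while preserving the relevant squared coefficients. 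With $y := \kappa(h_{c\lambda_i}, h_{c\lambda_j})$, bilinearity then yields
\begin{align*}
\kappa(u,u) &= x + 2y + x = 2(x+y), \\
\kappa(v,v) &= a(x - 2y + x) = 2a(x-y), \\
\kappa(u,v) &= \sqrt{a}\,(x - y + y - x) = 0,
\end{align*}
so the Gram matrix in the basis $\{u,v\}$ is diagonal with entries $2(x+y)$ and $2a(x-y)$, yielding the claimed isomorphism class $\qform{2(x+y),\,2a(x-y)}$.

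The positivity $x > y > 0$ then follows from \eqref{kappaii} and \eqref{kappaij}: using the $\tau$-symmetry to get $\sum_{\alpha \in \Phi^+}(\text{coeff of }\alpha_i)^2 = \sum_{\alpha \in \Phi^+}(\text{coeff of }\alpha_j)^2$, a termwise AM--GM comparison gives $x \geq y$, strict because $\alpha_i$ itself contributes to the $x$-sum but not to the cross-term defining $y$; and $y > 0$ whenever some positive root has both $\alpha_i$- and $\alpha_j$-coefficients nonzero, which holds in the Tits indices of interest.
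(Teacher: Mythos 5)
Your proposal is correct and follows essentially the same route as the paper's proof: expand $\kappa$ bilinearly on the basis \eqref{basis2}, use the $i\leftrightarrow j$ symmetry to equate the two self-pairings, and deduce positivity from the coefficient sums in \eqref{kappaii} and \eqref{kappaij}. Your justifications of the symmetry (via the diagram automorphism) and of $x>y$ (via the identity $x-y=\tfrac12\sum_{\alpha\in\Phi^+}(c_i-c_j)^2>0$, with $\alpha_i$ witnessing strictness) are in fact somewhat more careful than the paper's brief appeals to ``symmetry of the root system.''
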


\begin{proof}
By (\ref{basis2}), an $F$-basis for the quasi-split toral subalgebra corresponding to the orbit $\{\alpha_i, \alpha_j\}$ is
\[\{ h_{c\lambda_i} + h_{c\lambda_j}, \sqrt{a}h_{c\lambda_i}-\sqrt{a}h_{c\lambda_j} \}\]
We can compute the Killing form as follows.
\begin{align*}
(h_{c\lambda_i} + h_{c\lambda_j},h_{c\lambda_i} + h_{c\lambda_j}) &= (h_{c\lambda_i},h_{c\lambda_i})
+2(h_{c\lambda_i}, h_{c\lambda_j}) + (h_{c\lambda_j},h_{c\lambda_j})
\end{align*}
\begin{align*}
(&\sqrt{a}h_{c\lambda_i} -\sqrt{a} h_{c\lambda_j},\sqrt{a}h_{c\lambda_i} -\sqrt{a} h_{c\lambda_j}) = \\ &a(h_{c\lambda_i},h_{c\lambda_i})
-2a(h_{c\lambda_i}, h_{c\lambda_j}) + a(h_{c\lambda_j},h_{c\lambda_j})
\end{align*}
\begin{align*}
(h_{c\lambda_i} + h_{c\lambda_j},\sqrt{a}h_{c\lambda_i} -\sqrt{a} h_{c\lambda_j}) &= \sqrt{a}(h_{c\lambda_i},h_{c\lambda_i})
-\sqrt{a}(h_{c\lambda_j},h_{c\lambda_j})
\end{align*}
Because of the symmetry of the root system $(h_{c\lambda_i},h_{c\lambda_i}) = (h_{c\lambda_j},h_{c\lambda_j})$. Let $x = (h_{c\lambda_j},h_{c\lambda_j})$ and let $y=(h_{c\lambda_i},h_{c\lambda_j})$ giving the Killing form as \[\qform{2(x+y), 2a(x-y)}\]
Since $x,y$ are dimensions of irreducible representations $x,y > 0$. Furthermore roots with positive $\alpha_i$ and $\alpha_j$ coefficients contribute the sum in \ref{kappaij}, while those with only positive $\alpha_i$ (or $\alpha_j$) coefficient contribute only to the sum in \ref{kappaii}. Since $\alpha_i, \alpha_j$ are roots themselves, the latter sum will always be larger giving $x > y$.
\end{proof}
For a real Lie algebra, we let $\kappa$ denote the signature of the Killing form in  $\mathbb{Z}$. We use $\lambda_i$ denote a fundamental dominant weight of the larger Lie algebra $L$ and $\omega_i$ to denote a fundamental dominant weight of the subalgebra $L_J$ with simple roots $\Delta_J$. Let $L$ be a Lie algebra over $\mathbb{R}$ with anisotropic kernel $A$. Then $L$ decomposes as
\[L = Z_H(A) \oplus A \oplus \bigoplus_{\mu \in S^*, \mu \neq 0} (L_{\mu} \oplus L_{-\mu})\]
By Lemma \ref{rootform}, we know that $\kappa|_{(L_{\mu} \oplus L_{-\mu})}$ is hyperbolic so the signature here is 0. The subalgebra $A$ is compact, so $\kappa_{A} = (\dim A)\qform{-1}$ \cite[Proposition 6.6]{Hel}. Since $m^*(L)$, $m^*(A)$ are positive, $\frac{m^*(L)}{m^*(A)} = 1 \in \mathbb{R^*}/\mathbb{R}^{*2}$ and $\kappa|_{A} = (\dim A) \qform{-1}$.  This leaves only $\kappa|_{Z_H(A)}$. Since $A$ is the anisotropic kernel of $L$, $Z_H(A)$ is split or quasi-split.

Suppose $Z_H(A)$ contains non-trivial orbits so that $L$ is type 2. By Lemma \ref{orbit}, the Killing form on a non-trivial orbit is $\qform{2(x+y),-2(x-y)}$ where $x>y>0$. This form is hyperbolic, so its signature is also zero. The trivial orbits of $Z_H(A)$ are split. By Lemma \ref{rootform}, the non-hyperbolic part of the Killing form on a trivial orbit is a positive multiple of the Weyl-invariant bilinear form on coroots, which is positive definite \cite[8.5]{Hum}. This gives the Killing form on a trivial orbit as $\qform{1}$. Since each orbit in $Z_H(A)$ is orthogonal with respect to $\kappa$ we have $\kappa|_{Z_H(A)} = n\qform{1}$ where $n$ is the number of circled single vertices in the Tits index. This gives the Killing form on $L$:
\begin{equation}
\kappa = \text{\footnotesize{(\# of circled single vertices in Tits index)}}-\text{\footnotesize{(dim of the anisotropic kernel)}}
\end{equation}

\section{Inner Type $E_6$}
One of the main goals for this new method of calculating Killing form of isotropic Lie algebras is to be able to give a formula for the Killing form of the lesser understood exceptional Lie algebras. This method allows us to build up a Killing form for an exceptional Lie algebra based on the Killing form of a classical subalgebra. In the $E_6$ case, we utilize our knowledge of its $D_4$ subalgebra. Understanding $D_4$ together with the methods developed in the previous sections allows us to give an explicit formula for any Lie algebra of inner type $E_6$ based on its Rost invariant. The critical case here is the $E_6$ Lie algebra whose anisotropic kernel is of type $D_4$. We first show that we can achieve this case (or the split case) over an odd degree extension for any $^1\!E_6$ Lie algebra.

\begin{lemma}\label{odd}  For any Lie algebra $L$ of type $^1\!E_6$ (inner type) over $F$, there exists an odd degree extension $K/F$ such that the Tits index of $L \otimes_F K$ is one of the following:
\[
\begin{picture}(2.5,1.1)
\put(0,0){\usebox{\iEpic}}
    \put(0.25,0.25){\circle{\lrad}}
    \put(2.25,0.25){\circle{\lrad}}
    \put(1.25,0.25){\circle{\lrad}}
    \put(1.25,0.75){\circle{\lrad}}
    \put(.75,0.25){\circle{\lrad}}
    \put(1.75,0.25){\circle{\lrad}}
\end{picture}
\begin{picture}(2.5,1.1)
\put(0,0){\usebox{\iEpic}}
    \put(0.25,0.25){\circle{\lrad}}
    \put(2.25,0.25){\circle{\lrad}}
\end{picture} \]\\

\end{lemma}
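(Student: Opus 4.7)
The plan is to exploit the fact that inner type $E_6$ admits only three Tits indices and that the relevant Tits algebra of $L$ can be split by an odd-degree extension. From Tits' classification \cite{TitsSS}, the inner $E_6$ indices are exactly three: the split index (all vertices circled), the semi-split index with only vertices $1$ and $6$ circled (anisotropic kernel of type $D_4$), and the totally anisotropic index with no vertices circled. The lemma amounts to ruling out the third possibility after passage to a suitable odd-degree extension.

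Next I would turn to the Tits algebra $A(\lambda_1)$ associated to the minuscule fundamental weight $\lambda_1$. Since the corresponding representation over the separable closure has dimension $27$, the algebra $A(\lambda_1)$ is central simple over $F$ of degree $27$, and its Schur index therefore divides $27$, which is odd. Letting $D$ be the underlying division algebra and $K$ a maximal subfield of $D$, the extension $K/F$ has odd degree and $A(\lambda_1) \otimes_F K$ is split. Because $\lambda_6$ is the contragredient weight of $\lambda_1$, one has $A(\lambda_6) \sim A(\lambda_1)^{\mathrm{op}}$ in $\Br(F)$, so $A(\lambda_6) \otimes_F K$ is split as well.

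To finish, I would note that among the three inner $E_6$ Tits indices, the totally anisotropic one is the unique index for which $A(\lambda_1)$ and $A(\lambda_6)$ fail to be Brauer-trivial: in the other two indices vertices $1$ and $6$ lie in the circled part, and by the correspondence of \cite{Tits1971} between circled minuscule vertices and trivial Tits algebras this forces the associated Tits algebras to be split. Since $A(\lambda_1) \otimes_F K$ is split by construction, the index of $L \otimes_F K$ cannot be the totally anisotropic one, so it is one of the two displayed indices. The principal obstacle is the last step: one must verify from the classification that for inner $E_6$ the nontriviality of $A(\lambda_1)$ is precisely the obstruction to vertex $1$ being circled, rather than a strictly weaker condition. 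This can be done by direct inspection of the three possibilities listed above.
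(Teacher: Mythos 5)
Your first two steps are fine and match the paper's opening move: inner type $E_6$ has exactly the three Tits indices you list, and the Tits algebra attached to vertex $1$ has degree $27$, hence odd index, so it (and its opposite $A(\lambda_6)$) is split by an odd-degree extension $K/F$. The gap is exactly the point you flag at the end and then wave away: it is \emph{not} true that the anisotropic index is characterized among the three by the nontriviality of $A(\lambda_1)$. Triviality of the Tits algebras is necessary for vertex $1$ to be circled, but it is not sufficient, and no ``direct inspection of the three possibilities'' can make it sufficient. Concretely, by \cite[Theorem 1.4]{SA} a group of type $^1\!E_6$ with trivial Tits algebras is $\Inv(A)$ for an Albert algebra $A$, and when $A$ is an Albert \emph{division} algebra (which exists over suitable fields) this group is anisotropic even though every Tits algebra is split. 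So after your extension $K$ you may still be sitting at the totally anisotropic index, and your argument gives no way to rule this out.

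The paper's proof supplies precisely the missing second stage. After splitting the Tits algebras over an odd-degree extension, it identifies $L$ with $\Inv(A)$ for an Albert algebra $A$, and observes that the obstruction to $A$ being reduced is the invariant $g_3(A)\in H^3(\ast,\mathbb{Z}/3\mathbb{Z})$, which is a symbol $(a)\cdot(b)\cdot(c)$ and hence dies over the degree-$3$ (in particular odd) extension $K_a$ killing $(a)$. Over $K_a$ the reduced algebra $A$ contains a nonzero element $v=(1,0,0)$ with $v^{\sharp}=0$, which is a rational point of the projective homogeneous variety attached to vertex $1$, so vertex $1$ is circled and Tits' classification forces one of the two displayed indices. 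To repair your proof you would need to add this reduction of $A$ (or some equivalent argument that the anisotropy obstruction beyond the Tits algebras is killed by an odd-degree extension); without it the conclusion does not follow.
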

\begin{proof}
It suffices to prove this in the case $L$ is an algebraic group of type $^1\!E_6$. We first show that there exists an odd-degree extension of $F$ for which the Tits algebras of $L$ are trivial. Recall that $L$ has Tits algebras $A(2) = A(4) = F$ and $A(1) = A(5) = A(3)^{op}= A(6)^{op}$. The Tits algebra $A(1)$ has order dividing 27 in $\Br(F)$  (\cite[6.4.1]{Tits1971})and so there exists an odd degree extension $K$ that splits $A(1)$.

Over $K$ then, $L \cong \Inv(A)$, the group of isometries of the cubic norm form of an Albert algebra $A$ (\cite[Theorem 1.4]{SA}). We will show that there exists an odd degree extension of $K$ for which $A$ is reduced and that over this extension, the Tits index of $L$ has vertex 1 circled. By Tits' classification (\cite{TitsSS}), the Tits index will be one of the two given.

By \cite[40.8]{KMRT}, $A$ is reduced if and only if the invariant $g_3(A) \in H^3(*, \mathbb{Z}/3\mathbb{Z})$ is zero. But $g_3(A)$ is a symbol in $H^3(*, \mathbb{Z}/3\mathbb{Z})$ (\cite[p.303]{Th}), so writing $g_3(A) = (a)\cdot(b)\cdot(c)$, it is enough to show that $\res(a) \in  H^1(*, \mathbb{Z}/3\mathbb{Z}) = \Hom(*,\mathbb{Z}/3\mathbb{Z}) $ is zero over an odd-degree extension of $K$. Let $K_{a}$ be the Galois extension of $K$ corresponding to $\ker(a)$. Then $[K_a:K] = |\im(a)|$ divides 3 and clearly $\res(a) = 0$ in $H^1(K_a, \mathbb{Z}/3\mathbb{Z})$. Therefore, over $K_a$, $A$ is reduced.

We have a bijection between homogeneous varieties $L$-varieties over $F$ and $*$-invariant subsets of the Dynkin diagram \cite[6.4,2]{BT}. Furthermore, such a variety has a point if and only if this subset consists of circled vertices \cite[6.3,1]{BT}. In this case, we have an explicit description of the variety associated to vertex 1 in \cite[7.10]{GC}. It is $\{Kv  \mid  \text{$v\in A$ with $v \neq 0$ and $v^{\sharp} = 0$}\}$. Consider the diagonal matrix $(1,0,0) \in A$. Using the formula for $v^{\sharp}$ given in \cite[p.385, (6)]{Jac3} we have $(1,0,0)^{\sharp} = 0$. Since this element is nonzero, it is in the variety associated to vertex 1 and hence vertex 1 is circled in the Tits index of $L \otimes K_a$. Since $[K_a:K]$ and $[K:F]$ are both odd, we have proven the proposition. \qedhere

%
\end{proof}

\begin{lemma}\label{d4} Let $L$ be a Lie algebra of type $^1\!E_6$ as in Lemma \ref{odd}.
Then the $D_4$ subalgebra is $\frak{so}(q)$, where $q$ is a $3$-fold Pfister form  and $e_3(q) = r(L)$, the mod 2 part of the Rost invariant of $L$ as defined in \cite{GG}.
\end{lemma}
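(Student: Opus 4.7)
The plan is to use the identification $L \cong \Inv(A)$ for a reduced Albert algebra $A$ established in the proof of Lemma \ref{odd}, and to realize the $D_4$ subalgebra as the orthogonal Lie algebra of the norm form of an octonion algebra. In the split case one may take $q$ to be the hyperbolic 3-fold Pfister form and both sides vanish, so I would focus on the non-split case, where the anisotropic kernel of $L$ is the $D_4$ with simple roots $\{\alpha_2,\alpha_3,\alpha_4,\alpha_5\}$. In this case $L \cong \Inv(A)$ with $A$ reduced, and by the structure theorem for reduced Albert algebras (see \cite{KMRT}) we may write $A \cong H_3(C,\gamma)$ for some octonion algebra $C/F$ and diagonal scalars $\gamma$.

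Next I would identify the $D_4$ subalgebra with $\mathfrak{so}(N_C)$. In the matrix model of $A$, the common stabilizer in $\Inv(A)$ of the three primitive diagonal idempotents $E_1,E_2,E_3$ acts on each off-diagonal copy of $C$ through one of the three inequivalent 8-dimensional representations of $\spin(N_C)$ related by triality, and this stabilizer is itself simply connected of type $D_4$ and isomorphic to $\spin(N_C)$. Since it is a regular subgroup of $L$ of type $D_4$, its Lie algebra must be the $D_4$ subalgebra corresponding to $\Delta_0$. Because $C$ is an octonion algebra, its norm $N_C$ is a 3-fold Pfister form, so setting $q := N_C$ gives the first assertion.

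For the second assertion, I would match $e_3(q)$ with the modified Rost invariant $r(L)$. Since the proof of Lemma \ref{odd} arranged $g_3(A) = 0$, the mod-3 part of the Rost invariant of $L$ vanishes and the Rost invariant is carried entirely by its mod-2 component. For a reduced Albert algebra this component is the classical invariant $f_3(A) = e_3(N_C)$. By the definition of the modified Rost invariant in \cite{GG}, $r(L)$ is precisely this mod-2 component, so $r(L) = e_3(N_C) = e_3(q)$.

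The main obstacle is the second step: identifying the stabilizer of the three idempotents with $\spin(N_C)$ and verifying that this $D_4$ is the regular subgroup corresponding to $\Delta_0$ rather than some other embedding. This is standard but subtle because of triality, and I would defer the details to \cite{KMRT} or \cite{Jac3} rather than reprove them. The cohomological comparison in the third step is then bookkeeping, provided the definition of $r$ from \cite{GG} has been unpacked.
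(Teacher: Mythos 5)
Your proposal is correct in outline, but it takes a genuinely different route from the paper's. The paper never passes through the matrix model of the Albert algebra: it produces $q$ as the $D_4$-invariant bilinear form on the $8$-dimensional representation $V(\lambda_2)$, which is defined over $F$ because the Tits algebra $A(2)$ is split, and then proves $q$ is a $3$-fold Pfister form cohomologically --- Tits' computation gives $C_0(q) = A(3)\times A(5)$ split, hence $c(q)=1$, $\disc q = 1$, $\dim q = 8$, and Merkurjev's theorem places $q$ in $I^3F$; for $e_3(q) = r(L)$ it twists the split $E_6$ by a class $\eta' \in H^1(F,\spin_8)$ and invokes the fact that the Rost multiplier of $\spin_8 \hookrightarrow E_6$ is $1$ \cite{Gab}. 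You instead realize $L \cong \Inv(A)$ with $A \cong H_3(C,\gamma)$ reduced, identify the regular $D_4$ with $\spin(N_C)$ stabilizing the diagonal frame, and get the Pfister property for free since $q = N_C$ is an octonion norm, then read off $r(L)$ as $f_3(A) = e_3(N_C)$. This is a legitimate and in one respect cleaner path ($q$ literally represents $1$, whereas the Clifford-invariant argument pins $q$ down only up to similarity), but two steps you are implicitly relying on should be made explicit: (i) the class defining $L$ inside $H^1(F,E_6)$ can be taken in the image of $H^1(F,F_4)$ and the Rost multiplier of $F_4 \hookrightarrow E_6$ is $1$, which is what makes the mod-$2$ part of the Rost invariant of the $E_6$-torsor equal to $f_3(A)$ rather than merely related to it --- this is the exact analogue of the paper's $\spin_8$ step and needs the same kind of citation; and (ii) the frame stabilizer is the regular $D_4$ on $\Delta_0$ because the diagonal torus is a maximal $F$-split torus when $C$ is division, so that its centralizer's derived algebra is the anisotropic kernel. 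The paper's Tits-algebra argument has the additional advantage that it transports essentially verbatim to the outer case (Lemma \ref{2d4}), where the reduced-Albert-algebra model is less immediately available.
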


\begin{proof} Again we may prove this in the algebraic group case. Let $L$ be an algebraic group wth  Tits index above and let $D_4$ denote the $D_4$ subgroup with roots $\alpha_2, \alpha_3, \alpha_4, \alpha_5$.
Let $A'(2)$ denote the Tits algebra associated to vertex 2 as a Tits algebra of $D_4$. In the first case, it is clear that $A'(2)$ is split. In the second, $D_4$ is the anisotropic kernel of $L$ and so $A'(2)$ is the same as the Tits algebra $A(2)$ associated to vertex 2 in $L$ \cite[5.5.2,5.5]{Tits1971}. In $L$, $\lambda_2 \in \Lambda_r$ indicating that $A(2)$ is split over $F$ and the irreducible $8$-dimensional representation of $D_4$ with highest weight $\lambda_2$, $V(\lambda_2)$, is defined over $F$. By \cite[5.1.21, 5.1.24 and proof of 2.5.5]{GW}, there exists a non-zero $D_4$-invariant symmetric bilinear form $q$ on $V(\lambda_2)$ and a representation $\pi: D_4 \rightarrow {so}(q)$. Since $D_4$ is simple with $\dim D_4 = \dim {so}(q)$, $\pi$ is an isomorphism. By \cite[6.2]{Tits1971}, $C_0(q) = A(3) \times A(5)$. Furthermore, $\lambda_5 \equiv -\lambda_3 \equiv \lambda_1$ modulo $\Lambda_r$ and since $A(1)$ is split, $C_0(q) = M_8(F) \times M_8(F)$ and $C(q) = M_{16}(F)$ giving the Clifford invariant $c(q) = 1$ (\cite[V.3.12]{Lam}). Since $\dim q = 8$, $\disc(q) = (-1)^4$ and $c(q) = 1$, we have $q \in I^3F$ by Merkuryev's Theorem.

Let $E_6$ denote the split simply connected Lie algebra of type $^1\!E_6$. Then there is a unique $\eta \in H^1(F, E_6)$ such that $E_6$ twisted by $\eta$ is $L$ (\cite[Proposition 31.5]{KMRT}). But $L$ is also isomorphic to the algebra obtained by twisting $E_6$ by $\eta' \in H^1(F, {so}_8) \subset H^1(K, E_6)$ where ${so}_8$ twisted by $\eta'$ is ${so}(q)$ (\cite[Theorem 2.2]{TitsSS}). Furthermore $\eta' \in H^1(F, {\spin}_8)$ since $q$ is Pfister (\cite[31.41]{KMRT}). Then the quadratic form $q$ is uniquely associated to $\eta$ by Arason-Pfister Haupstatz \cite[X.5.1]{Lam}. Consider the following sequence:
\[ \begin{CD}
H^1(F, \spin_8) @>i>> H^1(F,E_6) @>\text{Rost}>> H^{3}(F,\mu_6^{\otimes 2}) @>p>> H^3(F, \mathbb{Z}/2\mathbb{Z})
\end{CD} \]
The Rost multiplier of the inclusion ${\spin}_8 \rightarrow E_6$ is 1 (\cite[2.2]{Gab}). Therefore
\[\text{Rost}_{E_6}(\eta) = \text{Rost}_{{\spin}_8}(\eta') = e_3(q) \in H^3(F,\mathbb{Z}/2\mathbb{Z}).\]
By \cite[Prop. 5.2 and Def. 5.3]{GG}, $\text{Rost}_{E_6}(\eta)$ projected to $H^3(F, \mathbb{Z}/2\mathbb{Z})$ is precisely the $r$-invariant $r(L)$.

 \end{proof}

We now prove Theorem \ref{esix1}.

\begin{proof} Let $L$ be a Lie algebra of type $^1\!E_6$ over $F$ and let $K/F$ be the odd degree extension described in
Lemma \ref{odd}.  We will show that over $K$, $L$ has a subalgebra of type $D_4 = \frak{so}(q)$ where $q$ is a Pfister form defined over $F$.

By Lemma \ref{d4}, $L \otimes K$ has a $D_4$ subalgebra of the form $\frak{so}(q)$ where $q$ is a $K$-Pfister form and $e_3(q) \in H^3(K, \mathbb{Z}/2\mathbb{Z})$ is $r(L \otimes K)= \res(r(L \otimes F))$. Since $\res(r(L \otimes F))$ is a symbol in $H^3(K, \mathbb{Z}/2 \mathbb{Z})$, $r(L \otimes F)$ is a symbol in $H^3(F, \mathbb{Z}/ 2\mathbb{Z})$ (\cite[Proposition 2]{DP}). Therefore $q$ is in fact a Pfister form over $F$.

We now compute the Killing form of $L \otimes K$ using Theorem \ref{main}. By Springer's Theorem, this is isomorphic to the Killing form of $L \otimes F$ since $[K:F]$ is odd. It is a straightforward exercise to show that the Killing form of  $\frak{so}(q)$ for $q=\qform{a_1,a_2,..,a_n}$ is $\kappa \cong \qform{-2(2n-2)}\lambda^2q$, where $\lambda_2q = \perp_{i<j} \qform{a_ia_j}$. In the case $q$ is $3$-fold Pfister form, the Killing form of $\frak{so}(q)$ is
\[\qform{-3}4q_0\]
where $q \cong 1 \perp q_0$.

Let $A = \frak{so}(q)$, the subalgebra of type $D_4$ in this $E_6$. Then $Z_H(A)$ is generated by $h_{3\lambda_1}, h_{3\lambda_6}$ and we compute the Killing form using the methods described in the previous chapter. Again, we use $\lambda_i$ to denote a fundamental dominant weight of $L \otimes K$ and $\omega_i$ to denote a fundamental dominant weight of $L_J \otimes K$. In these calculations we will use $J = \{1,2,3,4,5\}$, $\{2,3,4,5,6\}$, and $\{2,3,4,5\}$ so $L_J \otimes K$ is of type $D_5$, $D_5$, and $D_4$, respectively.

\begin{align*}
\kappa(h_{3\lambda_1}, h_{3 \lambda_1}) &= 18  \dim
V_{D_5}(-\alpha_1) = 18 \dim V_{D_5}(-2\lambda_1+\lambda_3)\\
&= 18 \dim V_{D_5}(-\omega_4)= 18*2^4 = 288
\end{align*}
\begin{align*}
\kappa(h_{3\lambda_6}, h_{3 \lambda_6})
&= 18 \dim V_{D_5}(-\alpha_6)
= 18 \dim V_{D_5}(\lambda_5-2\lambda_6) \\
&= 18 \dim V_{D_5} (\omega_5)
= 18*2^4 = 288\\
\end{align*}
\begin{align*}
\kappa(h_{3\lambda_1}, h_{3 \lambda_6})&= 18 \dim
V_{D_4}(-(\alpha_1+\alpha_3+\alpha_4+\alpha_5+\alpha_6))\\
&=18 \dim V_{D_4}(-\lambda_1+\lambda_2-\lambda_6) = 18 \dim V_{D_4}(\omega_2)\\
&= 18*8=144
\end{align*}

This form diagonalizes to $\qform{2,6}$.
Using Theorem \ref{main} and the fact that $\frac{m(E_6)}{m(D_4)}= \frac{12}{6} = 2$ gives the result. \qedhere

\end{proof}
The table below gives the results for each type of $^1\!E_6$ where $r(L) = 1 \perp q_0$. We need only know the Rost invariant to compute these forms. The reader should compare these streamlined these formulas to Jacobson's \cite{Jac2}. In the split case $r(L) = 4\mathcal{H}$, so we can give the formula with no $q_0$. In the third case, Lemma \ref{odd} shows that the Killing form is isomorphic to the Killing form of the split Lie algebra.

\begin{table}[h]
\begin{center}
\begin{tabular} {|c|c|}
\hline
Tits Index & Killing form \\
\hline
 \begin{picture}(2.5, 1.1)
\put(0,0){\usebox{\iEpic}}
    \put(1.25,0.25){\circle{\lrad}}
    \put(1.25,0.75){\circle{\lrad}}

\end{picture} or more circles & $\qform{1,1,1,1,2,6} \perp 36\mathcal{H}$ \\
\begin{picture}(2.5, 1.1)
\put(0,0){\usebox{\iEpic}}
\end{picture}  or more circles &  $\qform{-1}4q_0 \perp \qform{2,6} \perp 24 \mathcal{H}$\\
\hline
\end{tabular}
\caption{Results for $^1\!E_6$}
\end{center}
\end{table}
\section{Results for Lie algebras of type $^2\!E_6$}\label{esix2}
Theorem \ref{main} also yields results for some outer type $E_6$ Lie algebras. For this section $L$ refers to a Lie algebra of outer type $E_6$ with one of the following Tits indices:
\begin{equation}
\begin{picture}(5, 1) \put(0,0){\usebox{\dEpic}}
\put(1,0.5){\circle{\lrad}} \put(2,0.5){\circle{\lrad}}
\put(3,0.5){\oval(0.4,0.75)} \put(4,0.5){\oval(0.4,0.75)}
\end{picture}
\begin{picture}(5, 1) \put(0,0){\usebox{\dEpic}}
\put(1,0.5){\circle{\lrad}}
 \put(4,0.5){\oval(0.4,0.75)}
\end{picture}
\begin{picture}(5, 1) \put(0,0){\usebox{\dEpic}}
 \put(4,0.5){\oval(0.4,0.75)}
\end{picture}
\end{equation}
As in the inner type case, the subalgebra of type $^2\!D_4$ plays a crucial role. We begin by describing this subalgebra.

\begin{lemma} \label{2d4}
Let $\Gamma = Gal(F_{sep}/F)$ and let $*: \Gamma \rightarrow S_3$ be the map from $\Gamma$ into the automorphism group of the Tits index of $^2\!D_4$ given by the  $*$-action of $\Gamma$. Let $K = (F_{sep})^{\ker *}$. Then $K = F(\sqrt{a})$ for some $a \in \dot{F}$ and
\[^2\!D_4 = \frak{so}(1 \perp \qform{a}q_0)\]
where $1 \perp q_0$ is a Pfister form over $K$.
\end{lemma}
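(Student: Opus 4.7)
The plan is to bootstrap from the inner-type results in Lemmas \ref{odd} and \ref{d4} by first passing to $K$, where everything becomes of inner type, and then descending back to $F$. The order-$2$ diagram automorphism of $^2\!E_6$ underlying the $*$-action, which swaps $\alpha_1 \leftrightarrow \alpha_6$ and $\alpha_3 \leftrightarrow \alpha_5$, restricts to the $D_4$ subalgebra with simple roots $\{\alpha_2, \alpha_3, \alpha_4, \alpha_5\}$ as the swap $\alpha_3 \leftrightarrow \alpha_5$ that fixes the central vertex $\alpha_4$ and the outer vertex $\alpha_2$. Hence the image of $*\colon \Gamma \to S_3$ has order at most $2$, and it is nontrivial (otherwise $^2\!D_4$ itself would be inner), so $[K:F]=2$ and $K = F(\sqrt{a})$ for some $a \in \dot{F}$.

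Over $K$, the subalgebra $^2\!D_4 \otimes K$ is of inner type $^1\!D_4$, sitting inside the inner $^1\!E_6$ Lie algebra $L \otimes K$. Applying Lemma \ref{odd} over $K$ gives a further odd-degree extension $K'/K$ such that $L \otimes K'$ has one of the two listed Tits indices; Lemma \ref{d4} then identifies the $D_4$ subalgebra over $K'$ with $\frak{so}(q')$ for a $3$-fold Pfister form $q'$ over $K'$. By Springer's theorem, Pfister-ness descends along the odd extension $K'/K$, yielding a Pfister form $q$ over $K$ with $q \otimes K' \cong q'$, so $^2\!D_4 \otimes K \cong \frak{so}(q)$.

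To descend to $F$: by the classification of groups of type $D_4$, $^2\!D_4 \cong \frak{so}(\phi)$ for some $8$-dimensional $F$-quadratic form $\phi$, and the requirement that the splitting field of the $*$-action be $K = F(\sqrt{a})$ forces $\disc \phi = a$ modulo squares. Rescaling $\phi$ within its similarity class by an element of $\dot{F}$ (which leaves $\frak{so}(\phi)$ unchanged), we may assume both that $\phi$ represents $1$ and that $\phi \otimes K \cong q$ as $K$-forms, using roundness of the Pfister form $q$. Writing $\phi = \qform{1} \perp \phi'$ and setting $q_0 := \qform{a}\phi'$, a direct calculation gives $\qform{1} \perp \qform{a}q_0 = \qform{1} \perp \qform{a^2}\phi' \cong \phi$, so $^2\!D_4 \cong \frak{so}(\qform{1} \perp \qform{a}q_0)$. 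Since $\qform{a}$ becomes trivial over $K$, we have $(\qform{1} \perp q_0) \otimes K \cong \phi \otimes K \cong q$, which is Pfister over $K$, as required.

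The main obstacle I anticipate is justifying the simultaneous rescaling in the descent step: since $\frak{so}$ determines $\phi$ only up to similarity, we have $\phi \otimes K \cong cq$ for some $c \in \dot{K}$, and we need a scalar $s \in \dot{F}$ satisfying both $s \in D(\phi)$ (so that $s\phi$ represents $1$) and $sc \in D(q)$ (so that $s\phi \otimes K \cong q$ as Pfister forms). Combining the roundness of $q$ with a Hilbert $90$/norm argument on the coset of $c$ in $\dot{K}/\dot{F}\cdot D(q)$ should produce the required $s$.
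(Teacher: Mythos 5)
The skeleton of your argument (pass to $K$, where everything becomes inner, invoke Lemmas \ref{odd} and \ref{d4} over an odd extension, then descend) is genuinely different from the paper's, but it founders on the step ``by the classification of groups of type $D_4$, $^2\!D_4 \cong \frak{so}(\phi)$ for some $8$-dimensional $F$-quadratic form $\phi$.'' That is not what the classification says: algebras of type $^1\!D_4$ or $^2\!D_4$ correspond to degree-$8$ central simple algebras with orthogonal involution, and such an algebra is $\frak{so}(\phi)$ for a quadratic form $\phi$ over $F$ only when the underlying central simple algebra is split, i.e.\ when the Tits algebra attached to the vector representation $V(\lambda_2)$ is trivial over $F$. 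This cannot be recovered afterwards by descent: a quaternion algebra split by $K=F(\sqrt{a})$ need not be split over $F$. Establishing that splitness is precisely the first half of the paper's proof: $\lambda_2$ lies in the root lattice of $E_6$, so $A(2)$ is split as a Tits algebra of $E_6$, and because vertex $2$ is circled (or because the Tits algebras of the anisotropic kernel coincide with those of the ambient algebra, \cite[p.211]{Tits1971}) the same holds for the $D_4$ subalgebra; only then is $V(\lambda_2)$ defined over $F$, and the invariant bilinear form on it yields an isomorphism $D_4 \rightarrow \frak{so}(q)$ with $q$ an honest $F$-form. Your proposal omits this input entirely, and without it the $F$-form you intend to descend to need not exist.

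Two smaller points. First, ``Springer's theorem'' is not what descends Pfister-ness along the odd extension $K'/K$; that is Rost's descent property for Pfister forms (\cite[Proposition 2]{DP}, which the paper uses), and either tool presupposes a quadratic form already defined over $K$, which again requires the Tits-algebra argument. Second, the rescaling you flag as the main obstacle is actually the easy part: given $\phi$ over $F$ with $\phi\otimes K\cong cq$ and $q$ round, any $t\in D(\phi)\subset cD(q)$ gives $s=t^{-1}$ with $s\phi$ representing $1$ and $sc\in D(q)=G(q)$, hence $s\phi\otimes K\cong q$. The real content of the lemma is the Tits-algebra and Clifford-algebra bookkeeping (computing $C_0(q\otimes K)=A(3)\times A(5)$ and killing it over an odd extension) that your proposal skips.
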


\begin{proof}
Over $F$, the fundamental weight $\lambda_2$ is in the root lattice of $E_6$
so $A(2)$ is split as a Tits algebra of $E_6$. When vertex $2$ is circled in the Tits index of a Lie algebra of type $^2\!E_6$,
 $A(2)$ is split as a Tits algebra of $D_4.$ When $D_4$ is the anisotropic kernel of $E_6$, the Tits algebras for the
vertices in $D_4$ are the same as those for the vertices 2,3,4,5 in
$E_6$ above \cite[p.211]{Tits1971}.
Since $A_2$ is split in each of these cases, $V(\lambda_2)$, the
8-dimensional irreducible representation of $D_4$ with highest
weight $\lambda_2$, is defined over the base
field $F$. There exists a $D_4$ invariant symmetric bilinear form on
$V(\lambda_2)$ by \cite[5.1.21, 5.1.24 and Proof of 2.5.5]{GW} so we
have a representation $\pi: D_4 \rightarrow \frak{so}(q)$ for some
8-dimensional quadratic form $q$. Since $D_4$ is simple, $\pi$ is
injective, and since dim$D_4$=dim$\frak{so}(q)$=28, $\pi$ is an
isomorphism.

The image of the map $*: \Gamma \rightarrow S_3$ has order 2 since our $D_4$ is of type 2 \cite{TitsSS}. Therefore $[K:F] = 2$ and so $K = F(\sqrt{a})$ for some $a \in \dot{F}$. Notice that $L \otimes K$ is of inner type.
We now describe the Tits algebras of $L$ over $K$. Let $B$ be the Tits algebra associated to the orbit $\{\alpha_3, \alpha_5\}$ over $F$ (using the number of the vertices in $E_6$). Let $\iota$ be the non-identity element of $*(\Gamma)$. Then over $K$
\[B \otimes_F K = (B \otimes_K K) \otimes_F K
=B  \otimes_K (K \otimes_F K) = B  \otimes_K (K \times {^{\iota}K})
\]\[B  \otimes_K K \times B  \otimes_K {^{\iota}K}  = B  \times
{^{\iota}B}  = A(3) \times A(5)\]
where $A(3)$ and $A(5)$ are Tits algebras associated to vertices $3,5$ in the inner type $D_4 \subset E_6$.
The even Clifford algebra of $q \otimes K = A(3) \otimes A(5)$ \cite[6.2]{Tits1971}. But we are in the inner case over $K$, so there exists an odd degree extension of $K$ for which all Tits algebras are trivial. (See proof of Lemma \ref{odd}.) Over this extension $K'$ then $C_0(q \otimes K') = M_8(K') \otimes M_8(K')$ and $C(q \otimes K') = M_16(K')$ \cite[V.2.5]{Lam} so $c(q \otimes K') = 1$ and since also $\dim q = 8$, $\disc{q} = 1$, $q \otimes K'$ is a Pfister form. But then $q \otimes K$ is Pfister \cite[Proposition 2]{DP} and
\[q \otimes F = \qform{1} \perp \qform{a}q_0\]
where $1 \perp q_0 = q \otimes K$ \cite[VII.3.3]{Lam}.
\end{proof}

\begin{thm}\label{2esix}
Let $L$ be a Lie algebra of outer type $E_6$ with one of the Tits indices mentioned. The Killing form of $L$ is
\[\kappa \cong \qform{-6}(3q_0 \perp \qform{a}q_0) \perp \qform{6,2a} \perp 24 \mathcal{H}\]
where $\qform{1 \perp q_0}$ is $r(L \otimes K)$.
\end{thm}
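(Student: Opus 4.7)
My plan is to apply Theorem~\ref{main} with the anisotropic kernel $A = {}^2\!D_4$ identified in Lemma~\ref{2d4}, writing $A = \mathfrak{so}(Q)$ where $Q = 1 \perp \qform{a}q_0$ is an 8-dimensional $F$-form with $Q \otimes K$ a 3-fold Pfister form over $K$. Since $m(E_6)/m(D_4) = 12/6 = 2$ and $\dim E_6 - \dim D_4 - \dim Z_H(A) = 78 - 28 - 2 = 48$, the decomposition in Theorem~\ref{main} reduces the task to three computations: the scaled Killing form $\qform{2}\kappa_A$, the form $\kappa|_{Z_H(A)}$, and the hyperbolic contribution $24\mathcal{H}$.

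For the $A$-piece, the standard formula for the Killing form of an orthogonal Lie algebra gives $\kappa_{\mathfrak{so}(Q)} \cong \qform{-12}\lambda^2 Q$. Diagonalizing $Q = \qform{1, ab_1, \ldots, ab_7}$ yields $\lambda^2 Q \cong \qform{a}q_0 \perp \lambda^2 q_0$, and scaling by $\qform{2}$ together with reducing $-24$ modulo squares produces the $A$-contribution $\qform{-6}(\qform{a}q_0 \perp \lambda^2 q_0)$. To match the theorem's expression I would then establish the identity $\lambda^2 q_0 \cong 3q_0$ as $F$-forms. For any 3-fold Pfister form $\phi = 1 \perp q_0$, a direct pairing of the 28 products $b_ib_j$ with $i<j$ produces exactly four copies of each entry of $q_0$, so $\lambda^2 \phi \cong 4q_0$, and Witt cancellation of $q_0$ from $\lambda^2 \phi = q_0 \perp \lambda^2 q_0$ gives $\lambda^2 q_0 \cong 3q_0$ over any base where $\phi$ is Pfister. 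Applied to $Q \otimes K$ this yields the identity over $K$; the descent to $F$ is the delicate step.

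For $\kappa|_{Z_H(A)}$, Lemma~\ref{orbit} applied to the circled orbit $\{\alpha_1, \alpha_6\}$ gives $\qform{2(x+y), 2a(x-y)}$ with $x = \kappa(h_{3\lambda_1}, h_{3\lambda_1})$ and $y = \kappa(h_{3\lambda_1}, h_{3\lambda_6})$. By Proposition~\ref{dim} these scalars depend only on dimensions of irreducible $D_5$- and $D_4$-modules, and were already computed in the proof of Theorem~\ref{esix1} to be $x = 288$, $y = 144$; the computation is insensitive to the outer $*$-action because the relevant representations are defined after restriction to the inner $D_4$ and $D_5$ subalgebras. Substituting yields $\qform{864, 288a} \cong \qform{6, 2a}$ modulo squares. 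Combining the three contributions produces $\kappa \cong \qform{-6}(3q_0 \perp \qform{a}q_0) \perp \qform{6,2a} \perp 24\mathcal{H}$.

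The main obstacle is the descent of $\lambda^2 q_0 \cong 3q_0$ from $K$ to $F$: the kernel of $W(F) \to W(K)$ is generated by the Pfister form $\qform{1, -a}$, so vanishing of $\lambda^2 q_0 - 3q_0$ in $W(K)$ does not automatically force vanishing in $W(F)$. Two strategies present themselves: either exploit the specific $F$-structure of $Q$ to run the pairing argument directly at the $F$-level (grouping the 28 products arising in $\lambda^2 Q$ into the summands $\qform{a}q_0 \perp 3q_0$ by tracking how many $a$-factors occur in each $c_ic_j$), or invoke the Scharlau transfer $s_* \colon W(K) \to W(F)$ together with the Pfister structure of $Q \otimes K$ to propagate the identity back to $W(F)$.
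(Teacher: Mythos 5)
Your route is the same as the paper's: apply Theorem \ref{main} with $A = {}^2\!D_4 = \mathfrak{so}(1\perp\qform{a}q_0)$, so that the scalar is $\qform{2}$, the hyperbolic part is $\frac{78-28-2}{2}\mathcal{H}=24\mathcal{H}$, and $\kappa|_{Z_H(A)}$ comes from the orbit $\{\alpha_1,\alpha_6\}$ using the inner-type values $288$ and $144$. All of those computations are correct and agree with the paper (the paper computes the Gram matrix directly on the basis from (\ref{basis2}) rather than quoting Lemma \ref{orbit}, but it is the same calculation, yielding $\qform{864,288a}\cong\qform{6,2a}$).

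The step you leave open --- the isometry $\lambda^2 q_0 \cong 3q_0$ over $F$ rather than over $K$ --- is a genuine gap, and it is worth saying that the paper does not close it either: its proof simply writes $\lambda^2(1\perp\qform{a}q_0) = 3q_0\perp\qform{a}q_0$ with no justification, while Lemma \ref{2d4} only guarantees that $1\perp q_0$ is Pfister over $K$. Your worry is well founded: knowing the isometry over $K$ only places the difference in $\qform{1,-a}W(F)$, and it does not even pin down the discriminant of $q_0$ over $F$ (the $K$-Pfister property forces $d(q_0)\in F^{*2}\cup aF^{*2}$, whereas $\lambda^2 q_0\cong 3q_0$ forces $d(q_0)\in F^{*2}$, since $\disc(\lambda^2 q_0)=d(q_0)^6$ while $\disc(3q_0)=d(q_0)^3$). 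Rost's descent \cite{DP} is of no help here because $[K:F]=2$. What is actually needed, and what makes your first strategy work verbatim, is the $F$-level statement that $1\perp q_0$ is a $3$-fold Pfister form over $F$ itself; this is consistent with the Garibaldi--Petersson description quoted after the theorem, in which $1\perp q_0$ is the norm form of an octonion $F$-algebra. Once the entries of $q_0$ represent the seven nontrivial classes of a subgroup of $F^{*}/F^{*2}$, your pairing count gives $\lambda^2(1\perp q_0)\cong 4q_0$ and hence $\lambda^2 q_0\cong 3q_0$ by Witt cancellation over $F$. So: same approach, correct where executed, but the flagged descent is a real gap that neither you nor the paper's own proof closes; filling it amounts to strengthening Lemma \ref{2d4} from a Pfister form over $K$ to one over $F$.
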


\begin{proof} Let $A$ be the subalgebra of type $^2\!D_4$. The ratio of the Coxeter numbers of $L$ and $A$ is 2 so Theorem \ref{main} gives
\[\kappa = 2\kappa_{A} \oplus \kappa|_{Z_H(A)} \oplus 24 \mathcal{H} \]
Since $A = \frak{so}(1 \perp \qform{a}q_0)$, the Killing form of $A$ is
\[\kappa_A = -2(6)\lambda^2q = 3q_0 \perp \qform{a}q_0\]
The quasi-split toral subalgebra $Z_H(A)$ has $F$-basis $\{h_{3\lambda_1}+h_{3\lambda_6},
\sqrt{a}h_{3\lambda_1}-\sqrt{a}h_{3\lambda_6}\}$ \cite[6.9]{BT}. We calculate the Killing form using results from the $^1E_6$ case which are restated below.
\begin{align*}
\kappa(h_{3\lambda_1}, h_{3 \lambda_1}) &= 288 \\
\kappa(h_{3\lambda_6}, h_{3 \lambda_6}) &= 288\\
\kappa(h_{3\lambda_1}, h_{3 \lambda_6})& =144
\end{align*}
The Killing form for $Z_H(A)$ in the type 2 case is then:
\begin{align*}
&\kappa(h_{3\lambda_1}+h_{3\lambda_6},h_{3\lambda_1}+h_{3\lambda_6})= 3(288)
\end{align*}

\begin{align*}
&\kappa(h_{3\lambda_1}+h_{3\lambda_6},\sqrt{a}h_{3\lambda_1}-\sqrt{a}h_{3\lambda_6}) = 0
\end{align*}

\begin{align*}& \kappa(\sqrt{a}h_{3\lambda_1}-\sqrt{a}h_{3\lambda_6},\sqrt{a}h_{3\lambda_1}-\sqrt{a}h_{3\lambda_6})=a(288)
\end{align*}
This form diagonalizes to
\[\qform{6,2a}.\]
Combining these results gives the formula for outer type $E_6$.
\end{proof}

In the quasi-split case, $1 \perp q_0 = 4\mathcal{H}$ so $3q_0 = \qform{-1} \perp 3 \mathcal{H}$ giving the Killing form on $^2D_4$ as \[\qform{-1}\qform{1,1,1,a} \perp 12 \mathcal{H}\] The table below summarizes the Killing form for the other cases.

\begin{table}[h] \label{t2}
\begin{center}
\begin{tabular}{|c|c|}
\hline
Tits Index & Killing form \\
\hline
\begin{picture}(3,1) \put(0,0){\usebox{\dEpic}}
\put(1,0.5){\circle{\lrad}} \put(2,0.5){\circle{\lrad}}
\put(3,0.5){\oval(0.4,0.75)} \put(4,0.5){\oval(0.4,0.75)}
\end{picture}\hspace{110pt} & $\qform{6}\qform{1,1,1,a} \perp \qform{6,2a} \perp 36\mathcal{H}$\\
\begin{picture}(3, 1) \put(0,0){\usebox{\dEpic}}
 \put(4,0.5){\oval(0.4,0.75)}
\end{picture} \hspace{40pt} or more circles &  $ \qform{-6}(3q_0 \perp \qform{a}q_0)  \perp \qform{6,2a} \perp 24\mathcal{H}$\\
\hline
\end{tabular}
\caption{Results for $^2E_6$}
\end{center}
\end{table}

Garibaldi and Petersson \cite[11.1]{GP} describe a bijection between pairs $(C,K)$ of octonion $F$-algebras $C$ and quadratic extensions $K$ and simply connected groups of type $E_6$ as in Table \ref{t2}. In the above formulas $1 \perp q_0$ is the norm form of $C$ and $K = F(\sqrt{a})$.

It should also be noted that Jacobson's methods also give the above results, although his formulas are much more complicated (see \cite[p.114]{Jac2}) and one needs the main results of \cite{GP} to connect his formulas to the Tits index.

\section{Results for Lie algebras of type $E_7$}
For Lie algebras of type $E_7$ we have only inner type. Theorems \ref{main} and \ref{esix1} give us immediate results for isotropic Lie algebras of type $E_7$ when the anisotropic kernel is contained in the subalgebra of type $E_6$. The Killing form for these Lie algebras is obtained by letting $A = E_6$ in Theorem \ref{main} and using the results from Theorem \ref{esix1} to give the Killing form on $A$.

\begin{thm}\label{eseven}
Let $L$ be an isotropic Lie algebra of type $E_7$ whose Tits index is
\[\begin{picture}(3, 1.1)
\put(0,0){\usebox{\viipic}}
 \put(2.75,0.25){\circle{\lrad}}
\end{picture}\]
or has more circles. Let $q_0$ denote the invariant $r(E_6)$.  Then the Killing form on $L$ is isomorphic to
\[\qform{2,1,1} \perp 4\qform{-1}q_0 \perp 51 \mathcal{H} \]
\end{thm}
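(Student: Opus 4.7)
The plan is to invoke Theorem \ref{main} with $A$ the regular subalgebra of type $E_6$ corresponding to $\Delta' = \Delta \setminus \{\alpha_7\}$; the hypothesis on the Tits index guarantees $\Delta_0 \subseteq \Delta'$, so $A$ contains the anisotropic kernel and the theorem applies. Numerically, $\dim L = 133$, $\dim A = 78$ and $|\Delta \setminus \Delta'| = 1$, giving exactly $(133 - 78 - 1)/2 = 27$ hyperbolic planes from the off-diagonal weight spaces. The Coxeter ratio is $m(E_7)/m(E_6) = 18/12 = 3/2$, which equals $\qform{6}$ in the Witt ring, since $3/2 \equiv 6 \pmod{F^{*2}}$.

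For $\kappa_A$ I would use the decomposition appearing in the proof of Theorem \ref{esix1}, namely $\kappa_{E_6} = \qform{-6}4q_0 \perp \qform{2, 6} \perp 24\mathcal{H}$ (obtained from $\qform{2}\kappa_{D_4} \perp \qform{2, 6} \perp 24\mathcal{H}$ with $\kappa_{D_4} = \qform{-3}4q_0$). Scaling by $\qform{6}$ collapses the Pfister block via $\qform{6}\cdot\qform{-6} = \qform{-36} = \qform{-1}$, and transforms $\qform{2, 6}$ into $\qform{12, 36} = \qform{3, 1}$. Hence $\qform{6}\kappa_{E_6} = \qform{-1}4q_0 \perp \qform{3, 1} \perp 24\mathcal{H}$.

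Next I would compute $\kappa|_{Z_H(A)}$ via the method of Section 5. Since $A$ contains the anisotropic kernel, $Z_H(A)$ is split of rank $|\Delta \setminus \Delta'| = 1$, and is spanned by $h_{c\lambda_7}$ with $c = |\Lambda/\Lambda_r| = 2$. Taking $J = \{1,2,3,4,5,6\}$ so that $L_J$ is of type $E_6$, the vertex $\alpha_7$ sits at the end of the long arm of the $E_7$ diagram and has coefficient $1$ in the highest root of $E_7$; consequently the only nontrivial graded piece $M_{\{7\}}(l)$ occurs at $l = 1$, and by \cite{ABS} it is the $27$-dimensional fundamental representation of $E_6$. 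Proposition \ref{dim} then gives
\[
\kappa(h_{2\lambda_7}, h_{2\lambda_7}) \;=\; 2 \cdot 2^2 \cdot 1^2 \cdot 27 \;=\; 216,
\]
and since $216 = 6 \cdot 6^2$ we obtain $\kappa|_{Z_H(A)} \cong \qform{6}$.

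Assembling the three summands yields $\kappa \cong \qform{-1}4q_0 \perp \qform{3, 1, 6} \perp 51\mathcal{H}$. The final simplification is the identity $\qform{3, 1, 6} \cong \qform{2, 1, 1}$: because $3 + 6 = 9 = 3^2$, the binary form $\qform{3, 6}$ represents $1$, so it decomposes as $\qform{1, d}$ with $d \equiv 3 \cdot 6 \equiv 18 \equiv 2 \pmod{F^{*2}}$; hence $\qform{3, 6} \cong \qform{1, 2}$ and $\qform{3, 1, 6} \cong \qform{2, 1, 1}$. The main obstacle is simply the careful bookkeeping of the $\qform{6}$ scaling through each summand of $\kappa_{E_6}$; the eventual collapse of $\qform{3, 1, 6}$ to $\qform{2, 1, 1}$ via the square $3 + 6 = 9$ is the lucky identity that explains why the theorem has such a clean statement.
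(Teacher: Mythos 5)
Your proposal is correct and follows essentially the same route as the paper: take $A=E_6$ in Theorem \ref{main}, get $27\mathcal{H}$ from the nonzero weight spaces, compute $\kappa(h_{2\lambda_7},h_{2\lambda_7})=8\cdot 27=216\equiv 6$ via the $27$-dimensional representation $V_{E_6}(\omega_6)$, scale $\kappa_{E_6}$ by $\qform{6}$, and finish with $\qform{3,6}\cong\qform{1,2}$. The only cosmetic difference is that you make explicit the level-$1$ grading argument (coefficient of $\alpha_7$ in the highest root is $1$) that the paper leaves implicit.
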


\begin{proof}Let $A = E_6$.
$Z_H(A)$ is a one dimensional $F$-split toral subalgebra corresponding to vertex $7$. A basis for $Z_H(A)$ is just $h_{2\lambda_7}$ and the Killing form on $Z_H(A)$ can be computed as follows.
\begin{align*}
\kappa(h_{2\lambda_7},h_{2\lambda_7}) &=2(2^2) \dim
V_{E_6}(-\alpha_7) = 8 \dim V_{E_6}(\lambda_6-2\lambda_7)\\
& = 8 \dim V_{E_6}(\omega_6) = 8*27 = 216 \\
& \equiv 6 \hspace{1pt} \text{mod $F^{*2}$}
\end{align*}
The Coxeter number of $E_7$ is 18 and the Coxeter number of $E_6$ is 12 so Theorem \ref{main} and the computations above give

\[\kappa = \qform{\frac{3}{2}}(\qform{-24}4q_0 \perp \qform{2,6} \perp 24\mathcal{H}) \perp \qform{6} \perp 27 \mathcal{H} \]

Simplifying the form and reducing modulo squares gives
\[\kappa = 4\qform{-1}q_0 \perp \qform{3,1,6} \perp 51 \mathcal{H}\]
But
\[\qform{3,6} \cong \qform{1,2} \]
since they have the same discriminant and both represent 9 \cite[I.5.1]{Lam}.
This gives
\[\kappa =  \qform{2,1,1} \perp 4\qform{-1q_0} \perp 51 \mathcal{H} \qedhere \]

\end{proof}

Table \ref{t3} summarizes the results for $E_7$. Here $1 \perp q_0$ is the $r$-invariant of the $E_6$ subalgebra. In the split case $1 \perp q_0 = 4 \mathcal{H}$ so $q_0 = \qform{-1} \perp 3 \mathcal{H}$.

\begin{table}[ht]\label{t3}
\begin{center}
\begin{tabular} {|c|c|}
\hline
Tits Index & Killing form \\
\hline
\begin{picture}(2.5, 1.1)
\put(0,0){\usebox{\viipic}}
    \put(0.25,0.25){\circle{\lrad}}
    \put(2.25,0.25){\circle{\lrad}}
    \put(1.25,0.25){\circle{\lrad}}
    \put(1.25,0.75){\circle{\lrad}}
    \put(.75,0.25){\circle{\lrad}}
    \put(1.75,0.25){\circle{\lrad}}
        \put(2.75,0.25){\circle{\lrad}}
\end{picture} \hspace{75pt} & $\qform{2,1,1,1,1,1,1} \perp 63\mathcal{H}$ \\
%
%
%
%

\begin{picture}(2.5, 1.1)
\put(0,0){\usebox{\viipic}}
 \put(2.75,0.25){\circle{\lrad}}
\end{picture} \hspace{10pt} or more circles
&  $\qform{2,1,1} \perp 4\qform{-1}q_0 \perp 51\mathcal{H}$\\
\hline
\end{tabular}
\caption{Results for $E_7$}
\end{center}
\end{table}

\section{Acknowledgements}
The author thanks her thesis advisor, Skip Garibaldi, as well as J-P. Serre, R. Parimala and D. Hoffmann for their comments.

\end{document}